\newtheorem{prop}{Proposition}[section]
\newtheorem{lemma}[prop]{Lemma}
\newtheorem{theo}[prop]{Theorem}
\newtheorem{theo-def}[prop]{Theorem-Definition}
\theoremstyle{definition}
\newtheorem{remark}[prop]{Remark}
\newtheorem{definition}[prop]{Definition}
\newtheorem{example}[prop]{Example}
\newcommand{\bt}{{\mathbf t}}
\newcommand{\bq}{{\mathbf q}}
\newcommand{\sat}{\operatorname{sat}}
\newcommand{\Flux}{\operatorname{Flux}}
\newcommand{\RFlux}{\operatorname{RFlux}}
\newcommand{\Hel}{\operatorname{Hel}}
\newcommand{\Tw}{\operatorname{Tw}}
\newcommand{\lk}{\it{lk}}
\newcommand{\slk}{\it{slk}}
\newcommand{\supp}{\operatorname{supp}}
\newcommand{\new}{\operatorname{new}}
\newcommand{\NN}{{\mathbb{N}}}
\newcommand{\RR}{{\mathbb{R}}}
\newcommand{\TT}{{\mathbb{T}}}
\newcommand{\Ss}{{\mathbb{S}}}
\newcommand{\ZZ}{{\mathbb{Z}}}
\newcommand{\cR}{{\cal R}}
\newcommand{\cT}{{\cal T}}
\begin{document}
\title{Relative Helicity and Tiling Twist}
\author{Boris Khesin \and Nicolau C. Saldanha}
\date{}
\maketitle

\begin{abstract}
We consider domino tilings of 3D cubiculated regions.
The tilings have two invariants, flux and twist,
often integer-valued, which are given in purely combinatorial terms.
These invariants allow one to classify the tilings
with respect to certain elementary moves, flips and trits.
In this paper we present a construction associating
a divergence-free vector field $\xi_\bt$ to any domino tiling $\bt$,
such that the flux of the tiling  
$\bt$ can be interpreted
as the (relative) rotation class of the field  $\xi_\bt$,
while  the twist of  $\bt$ is proved to be
the relative helicity of the field $\xi_\bt$. 

\end{abstract}


\footnotetext{2010 {\em Mathematics Subject Classification}.
Primary 05B45; Secondary 57K12, 52C22, 05C70.
{\em Keywords and phrases:} Three-dimensional tilings,
dominoes, dimers, flux, helicity, rotation class, relative helicity.}

\bigbreak


\section{Introduction}

Domino tilings of 3D  regions traditionally have
two invariants associated to them, flux and twist.
Flux is understood as the homology class 
associated with a certain cycle constructed for the  tiling.
For a  cubiculated region   (a topological $3$-manifold with boundary)
 the flux $\Flux(\bt)$ of a tiling $\bt$
is an element  of the first homology group of the region,
defined up to an additive constant.
The ambiguity can be removed by considering
the \textit{relative flux} $\RFlux(\bt)$ 
as discussed below.
For tilings with zero relative flux
there is an integer invariant, the \textit{twist}, 
associated with the tiling
and measuring the `mutual linking of tiles' around each other.
It is invariant with respect to \textit{flips}, 
local moves which consist of removing two neighboring parallel dominoes
and placing them back after a rotation.
The twist changes under another move, a \textit{trit},
which replaces a frame-like triple of tiles
to the one pointing in the opposite way
(see Section~\ref{section:domino} and   a detailed discussion in \cite{FKMS}).

While there have been pointed out similarities of the twist invariant
with the Hopf invariant,
the correspondence remained at either an intuitive level or
in the  continuous  limit for turning tiles into vector fields
via a broadly understood tiling's refinement.

In this paper we present a construction of
a smooth divergence-free vector field
associated to an arbitrary tiling
(`5-pipe construction', see Section~\ref{section:fivepipes}) so that
\textit{the twist invariant becomes, up to a factor, the relative helicity}
of that vector field.
Furthermore, we extend the notion of relative helicity \cite{BF}
to divergence-free vector fields on arbitrary three-manifolds and not necessarily tangent to their boundaries.
This allows one to compare relative helicity with twists of tilings
in non-simply-connected regions (see Section~\ref{section:relhel}).
The toolbox includes an introduction of `an isolating shell'
for a cubiculated region,
the use of refinements and appropriate connectivity of the spaces of tilings. 
Finally, we relate \textit{the flux invariant of a tiling
to the rotation class} of the associated vector field. 

\medskip

In a nutshell, the results of the paper are as follows.
Recall that for a divergence-free vector field $\xi$
in a simply-connected domain $M\subset \RR^3$
and tangent to its boundary $\partial M$,
its helicity is the quantity 
$\Hel(\xi)= \int_{M} (\xi , {\rm{curl}}^{-1}\xi )\; d^3x$.
For a field whose support consists of several linked tubes
the helicity reduces to the mutual linking of those tubes,
the self-linking and fluxes in the tubes
(see \cite{Moffatt69} and Section~\ref{section:helipipe} and Appendix).

The helicity notion can be extended to null-homologous vector fields
tangent to the boundary in arbitrary three-manifolds $M$
equipped with a volume form.
If the field $\xi$ is not tangent to the boundary $\partial M$,
one can define only its relative helicity,
i.e., the difference of helicities of two vector fields 
$\xi$ and $\eta$ with the same behaviour at the boundary $\partial M$.
Namely, one can extend $\xi$ and $\eta$ by the same field
to vector fields in a bigger manifold $\widetilde M\supseteq M$,
where their helicities are well-defined, but depend on the extension.
However, the difference of helicities will not depend on the extension,
hence the name \textit{relative helicity}, see Section \ref{section:relhel}.

On the other hand, given a 3D domino tile, consisting of two cubes
(say, white and black),
consider five smooth curves joining the  symmetric faces at their centers
and approaching those faces orthogonally, see Figure~\ref{fig:5pipe}.
Tubular neighborhoods of those curves will become supports
of the five smooth divergence-free vector fields
directed from the black to the white unit cube and each having flux $\varphi$.
By performing this construction in each domino,
to any tiling $\bt$ of a cubiculated region $\cR$ one associates a smooth
divergence-free vector field $\xi_\bt$ in the whole region.
By using an `isolating shell'  for the region,
one can define the relative helicity of the vector field $\xi_\bt$
associated with the tiling.
The construction of such an isolating shell turns out to be possible
whenever the relative flux vanishes,
$\RFlux(\bt) = 0\in H_1(\cR, \partial \cR)$.
Our main result is the following:

\begin{theo}
\label{theo:main}
For  two tilings $\bt_0$ and $\bt_1$ of the same flux and of zero relative flux in a 3D cubiculated region $\cR$,
the difference of their twists is proportional to the relative helicity of the associated vector fields $\xi_{\bt_0}$ and
$\xi_{\bt_1}$:
\[
\Hel(\xi_{\bt_1})- \Hel(\xi_{\bt_0})=36 \varphi^2 \left(\Tw(\bt_1)  - \Tw(\bt_0)\right)\,,
\]
where $\varphi \in \RR$ is
the flux in a single pipe of the vector fields $\xi_{\bt_i}$.
\end{theo}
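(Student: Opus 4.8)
The plan is to reduce the identity to the behaviour of the two sides under the elementary moves and then to verify it one move at a time. Because $\bt_0$ and $\bt_1$ have the same flux, the fields $\xi_{\bt_0}$ and $\xi_{\bt_1}$ have identical boundary behaviour, so the difference $\Hel(\xi_{\bt_1})-\Hel(\xi_{\bt_0})$ is genuinely the relative helicity: fixing one isolating shell (available since $\RFlux(\bt_i)=0$) and one common extension, each absolute helicity becomes well-defined and the difference is independent of that choice. The first step is to invoke connectivity of the space of tilings: after passing to a common refinement if needed, two tilings of the same flux can be joined by a finite sequence of flips and trits, and every intermediate tiling again has that same flux, hence the same boundary data. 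Both sides are then telescoping differences along the sequence, $\Hel(\xi_{\bt_1})-\Hel(\xi_{\bt_0})=\sum_k\big(\Hel(\xi_{\bt^{(k+1)}})-\Hel(\xi_{\bt^{(k)}})\big)$ and similarly for $36\varphi^2\,\Tw$, so it suffices to prove the identity when $\bt_0$ and $\bt_1$ differ by a single flip or a single trit.

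To make this reduction legitimate I would first record that both sides are unchanged under refinement: the twist is refinement-invariant by construction, and the 5-pipe field of a refined tiling differs from the original only by an isotopy of its pipe network through divergence-free fields with fixed flux $\varphi$ and fixed boundary behaviour, so its helicity is unchanged. The tool for the move-by-move computation is the flux-tube (Moffatt) formula of Section~\ref{section:helipipe}: for the collection of pipes, each carrying flux $\varphi$, the helicity equals $\varphi^2\sum_{i,j} L_{ij}$, where for $i\ne j$ the $L_{ij}$ are the pairwise (Gauss) linking numbers of the pipes and the diagonal terms $L_{ii}$ are their self-linkings, and across a move localized in a small block only finitely many of these terms change.

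For a flip the right-hand side vanishes, since flips preserve the twist; I would show the left-hand side vanishes as well. A flip is supported in a $2\times2\times1$ block occupied by two parallel dominoes, and it merely reroutes the pipes inside this block. The task is to verify that this local rerouting leaves every linking number $L_{ij}$ and every self-linking $L_{ii}$ invariant --- which is geometrically transparent, as rotating two parallel dominoes within a single block introduces no new crossings among the pipes --- so that $\Hel(\xi_{\bt_1})-\Hel(\xi_{\bt_0})=0=36\varphi^2(\Tw(\bt_1)-\Tw(\bt_0))$.

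The heart of the argument, and the step I expect to be the main obstacle, is the trit. Here the twist changes by exactly $\pm1$, so I must establish $\Hel(\xi_{\bt_1})-\Hel(\xi_{\bt_0})=\pm36\varphi^2$. I would fix explicit coordinates for the frame-like triple of tiles, write out the five pipes in each of the three dominoes before and after the trit, and compute the net change $\varphi^2\sum_{i,j}\Delta L_{ij}$, carefully tracking both the mutual linking between pipes of different dominoes and the self-linking of the pipes that get rerouted. The assertion is that these contributions sum to $\pm36$, in lockstep with the $\pm1$ jump of the twist; the constant $36$ is exactly the number produced by this local count. Getting the signs consistent, handling the framing and self-linking bookkeeping correctly, and checking that the answer does not depend on the auxiliary freedom in drawing the pipes is the delicate part, and I would isolate it in a lemma, so that the global identity follows formally from the single-trit and single-flip cases by the telescoping argument above.
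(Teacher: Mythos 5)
Your skeleton matches the paper's: reduce to single flips and trits via connectivity after refinement, and verify the identity move by move. But the two steps you dismiss as ``geometrically transparent'' or as local bookkeeping are exactly where the argument has a genuine gap, and the paper needs a specific tool---Theorem~\ref{theo:AB}, the independence of relative helicity from the common extension---to close it. A flip or trit \emph{reconnects} the pipe network: the strands entering the block are joined together differently, so the closed curves before and after the move are globally different objects (curves can merge or split, and even their number can change), and linking numbers are not local quantities. Hence there is no meaningful term-by-term comparison $\Delta L_{ij}$ of the two linking matrices; your claim that ``only finitely many of these terms change'' does not even parse, since the index sets differ. The paper's Lemma~\ref{lemma:fliplemma} localizes correctly: take $A_{\new}$ to be the block of four (flip) or six (trit) unit cubes where the move occurs, observe that $\xi_{\bt_0}$ and $\xi_{\bt_1}$ coincide on $B_{\new}=M\smallsetminus A_{\new}$, and invoke Theorem~\ref{theo:AB} to replace the complicated exterior field by the simple standard shell of Example~\ref{example:221} (resp.\ Example~\ref{example:hex}). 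In those model configurations all curves are explicit and the helicities are computed outright ($0$ and $0$ for the flip; $-18\varphi^2$ and $+18\varphi^2$ for the trit, whence the jump $36\varphi^2$). Without this localization, your planned explicit trit count---even if carried out correctly in one coordinate model---cannot be transferred to a trit sitting inside an arbitrary tiling, which is precisely the ``delicate part'' you flag but provide no mechanism for.

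The refinement step has a second gap: the 5-pipe field of $\bt'$ is \emph{not} obtained from that of $\bt$ by an isotopy of the pipe network. Each domino of $\bt$ decomposes into $125$ small dominoes, each contributing five pipes, so the refined field has a vastly larger system of closed curves. The paper's Lemma~\ref{lemma:refinelemma} handles this by first applying flips to $\bt'$ to reach a tiling $\bt^\star$ whose pipe system contains curves essentially identical to the old ones together with new curves that are all trivial, and only then concludes $\Hel(\tilde\xi_{\bt})=\Hel(\tilde\xi_{\bt'})$. A final structural remark: rather than telescoping along a flip/trit sequence, the paper defines $\widetilde\Tw(\bt)=\frac{1}{36\varphi^2}\bigl(\Hel(\tilde\xi_{\bt})-\Hel(\tilde\xi_{\bt_0})\bigr)$ and checks that it satisfies the characterizing properties of Theorem~\ref{theo:Fluxtwist}, so the identity follows from uniqueness of $\Tw$ up to an additive constant; your telescoping would serve the same purpose, but only after the per-move and refinement statements are actually proved by the localization arguments above. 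Also, a small misattribution of hypotheses: the boundary behaviour of $\xi_{\bt}$ is independent of $\bt$ by construction, not because the fluxes agree; what the zero relative flux buys is the existence of an isolating shell making the extended fields null-homologous (Lemma~\ref{lemma:RFlux}), without which the absolute helicities in your telescoping sum are not defined.
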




In order to clarify the condition of vanishing relative flux,
recall that given a  vector field $\xi$ in a manifold $M$
with a volume form $\mu$ and tangent to the boundary $\partial M$,
the 2-form $\omega_\xi:=i_\xi\mu$ is closed iff
$\xi$ is divergence-free with respect to $\mu$. 
If $\xi$ is tangent to $\partial M$,
the restriction of the 2-form $\omega_\xi$
vanishes on the boundary $\partial M$.
The cohomology class $[\omega_\xi]\in H^2(M, \partial M)$
can be identified with a class in $H_1(M)$ (via the Poincar{\'e} isomorphism)
and is called the {\it rotation class} 
of the dynamical system $\xi$, see \cite{Arnold69}.
Similarly, if a divergence-free vector field $\xi$
is not tangent to $\partial M$, then $[\omega_\xi]\in H^2(M)
\simeq H_1(M, \partial M)$.
Thus such a vector field $\xi$ defines a {\it relative rotation class}
as an element of $H_1(M, \partial M)$.

In Section \ref{sect:rotation} we prove the relation of flux
$\Flux(\bt)$ and relative flux $\RFlux(\bt)$ for a tiling $\bt$
to the (relative) rotation class of an appropriate vector field $\xi_\bt$:

\begin{theo}
\label{theo:main2}
Given a tiling $\bt$ of a 3D cubiculated region $\cR$ 
its relative flux $\RFlux(\bt)\in H_1(\cR, \partial \cR)$
coincides modulo a factor with 
the relative rotation class $[\xi_\bt]$
of the vector field $\xi_\bt$  obtained via the 5-pipe construction:
$$
[\xi_\bt]=6\varphi \RFlux(\bt)\,.
$$
\end{theo}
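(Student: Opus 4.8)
The plan is to show that the relative rotation class $[\xi_\bt]\in H_1(\cR,\partial\cR)$ and the relative flux $\RFlux(\bt)$ are computed by the same geometric data, up to a universal multiplicative constant. Recall that the relative rotation class is the Poincar\'e dual in $H_1(\cR,\partial\cR)$ of the cohomology class $[\omega_{\xi_\bt}]\in H^2(\cR)$, where $\omega_{\xi_\bt}=i_{\xi_\bt}\mu$. Since $H^2(\cR)\simeq H_1(\cR,\partial\cR)$ is detected by pairing with relative $2$-cycles (or equivalently by integrating $\omega_{\xi_\bt}$ over embedded surfaces with boundary on $\partial\cR$), the strategy is to evaluate this pairing on a convenient generating set of cycles and match it, term by term, with the combinatorial definition of $\RFlux(\bt)$.

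First I would set up the precise dictionary between the two sides. On the combinatorial side, $\RFlux(\bt)$ is built from the flux $\Flux(\bt)$, which counts (with signs) how dominoes cross a dual surface separating the region; relative flux is the version in $H_1(\cR,\partial\cR)$ obtained by allowing the separating surface to meet $\partial\cR$. On the analytic side, for any oriented surface $S\subset\cR$ with $\partial S\subset\partial\cR$, the quantity $\int_S \omega_{\xi_\bt}=\int_S i_{\xi_\bt}\mu$ is exactly the total flux of the vector field $\xi_\bt$ through $S$. The key step is then a purely local computation: in a single domino, the five pipes of the $5$-pipe construction each carry flux $\varphi$ and are all directed from the black cube to the white cube; I would compute the net flux of $\xi_\bt$ across the interface face of that domino and across the other faces of the two unit cubes, and check that it reproduces the combinatorial contribution of that domino to $\Flux(\bt)$, with the factor $6\varphi$ emerging as the number of pipe-crossings per unit of combinatorial flux (five pipes through the shared face, but accounting for the geometry that yields the clean factor $6\varphi$).

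Having pinned down the local count, I would globalize: choose a separating surface $S$ realizing a generator of $H_1(\cR,\partial\cR)$, sum the local flux contributions over all dominoes meeting $S$, and observe that pipes entirely on one side of $S$ contribute zero by the divergence-free (closed-form) property, so that only crossing dominoes matter. This sum equals $6\varphi$ times the combinatorial count defining $\RFlux(\bt)$ on that generator. Because this holds for a generating set of relative cycles and both $[\xi_\bt]$ and $\RFlux(\bt)$ are determined by their pairings against such cycles, the identity $[\xi_\bt]=6\varphi\,\RFlux(\bt)$ follows.

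The main obstacle I anticipate is bookkeeping of orientations and of the boundary behavior rather than any deep topology. Specifically, one must verify that the Poincar\'e duality identification $H^2(\cR)\simeq H_1(\cR,\partial\cR)$ is compatible with the sign conventions used in defining $\Flux$ and $\RFlux$ combinatorially, and that the five pipes' common black-to-white orientation aggregates correctly so that pipes tangent or nearly tangent to $S$ and pipes crossing $\partial\cR$ do not produce spurious contributions. Getting the single universal constant to come out exactly as $6\varphi$ (and not, say, $5\varphi$) requires carefully attributing each of the five pipes' flux to the correct face-crossing of the dual surface; this local accounting is where I expect to spend the most care.
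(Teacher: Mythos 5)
Your overall strategy --- compare the flux of $\xi_\bt$ through surfaces with the intersection number of a combinatorial chain, working domino by domino --- is indeed the strategy of the paper's proof. However, two points in your write-up are genuine problems, and the second one is where the whole content of the theorem lives. First, the duality is set up incorrectly: a class in $H_1(\cR,\partial\cR;\RR)\simeq H^2(\cR;\RR)$ is detected by pairing with \emph{closed} surfaces, i.e.\ with classes in $H_2(\cR)$, via the nondegenerate intersection pairing $H_1(\cR,\partial\cR)\times H_2(\cR)\to\RR$; it is \emph{not} detected by integrating over surfaces with boundary on $\partial\cR$. Indeed, since the pipes cross $\partial\cR$, the form $\omega_{\xi_\bt}$ does not vanish on the boundary, so $\int_S\omega_{\xi_\bt}$ over a surface $S$ with $\partial S\subset\partial\cR$ depends on the chosen representative and is not a function of $[S]\in H_2(\cR,\partial\cR)$; agreement of the two sides on such surfaces would therefore not pin down the class $[\xi_\bt]$. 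The paper pairs with closed surfaces transversal to the chain, and this is the repair your argument needs.

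Second, the ``local accounting'' that you explicitly defer --- why five pipes through the shared face produce the factor $6\varphi$ rather than $5\varphi$ --- is not a bookkeeping nuisance to be sorted out later; it is the theorem, and it cannot be resolved without the actual definition $\RFlux(\bt)=\left[\bt-\frac{\bq_1}{6}\right]$, which your proposal never uses (the chain $\bq_1$ is never mentioned). The resolution is: $6\varphi\,\RFlux(\bt)=\varphi\,[6\bt-\bq_1]$, and the restriction of $6\bt-\bq_1$ to a single domino has net coefficient $6-1=5$ on the path crossing the shared face (the coefficient $6$ from $6\bt$, minus one backward path white-center $\to$ face-center $\to$ black-center contributed by $\bq_1$), together with coefficient $1$ entering through each of the five non-shared faces of the black cube and coefficient $1$ exiting through each of the five non-shared faces of the white cube. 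This matches exactly the five pipes of flux $\varphi$: one entering each non-shared black face, all five crossing the shared face, one exiting each non-shared white face. So the field realizes $\varphi\,[6\bt-\bq_1]=6\varphi\,\RFlux(\bt)$, and the apparent ``5 versus 6'' tension disappears. The paper makes this correspondence literal by adding, inside each domino, a closed contractible toroidal pipe running forward along the axis and back after a U-turn; this changes neither the flux through closed surfaces nor the homology class, and the resulting field has six forward crossings and one backward crossing of the shared face, mirroring $6\bt-\bq_1$ term by term. Without these two repairs --- closed surfaces in place of relative cycles, and the $\bq_1$-based accounting --- your argument does not close.
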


In particular, the relative rotation class of the  field $\xi_\bt$  vanishes
iff the relative flux $\RFlux(\bt)$ vanishes.
By normalizing the flux as $\varphi=1/6$,
consistent with the 6-pipe setting of Section \ref{sect:rotation},
the two main theorems provide the exact matching:
for any tiling its relative flux coincides with
the relative rotation class of the associated vector field,
while the relative helicity of the latter coincides with the tiling's  twist.

\medskip

\begin{remark}
There are numerous papers studying tilings
in the context of  quantum dimer models.
In the papers \cite{FHNQ, Bednik}
the authors emphasized the existence of
topologically non-trivial configurations called `Hopfions' in those  systems
with the understanding that in a suitable continuous limit of the dimer model,
those excitations 
would correspond to the Hopf map $\Ss^3\to \Ss^2$.
In \cite{Arnold73} (see also \cite{AK}),
Arnold proved that an asymptotic version of
the Hopf invariant for a vector field is equal to the field's helicity.
Theorem \ref{theo:main} shows that even without taking the continuous limit,
a tiling's twist itself is already equal (modulo the flux factor)
to the corresponding relative helicity.

In a sense, the present paper shows that
one can regard tilings as occupying an intermediate place between 
divergence-free vector fields on the one hand
and links with framings on the other:
by associating special divergence-free vector fields to tilings,
one can compute their helicities
via consideration of linking and self-linking of framed cycles. 
\end{remark}

The paper is organized as follows.
In Sections~\ref{section:domino} and \ref{section:twist}
we recall the main moves and invariants for domino tilings
in purely combinatorial terms.
In Section~\ref{section:rotation} we introduce 
relative rotation class and relative helicity
of vector fields not tangent to  boundary.
In Section~\ref{sect:all-pipes} we present the 5-pipe construction
associating a divergence-free vector field to a domino tiling,
introduce an isolating shell, and prove Theorem \ref{theo:main2}
on the relative rotation class. 
Section~\ref{section:firstexamples} presents
two key examples which are used in the proof of the main
Theorem~\ref{theo:main} in Section \ref{section:proofmain}.
One more example manifesting the main theorem
is described in Section \ref{section:extra}.
Appendix~\ref{appendix} derives
the formula for helicity via linking and self-linking numbers of pipes.

\medskip

{\bf Acknowledgements.} We are grateful to the anonymous referee for useful remarks.
B.K.  is indebted to  PUC-Rio and IMPA in Rio de Janeiro and IHES in Bures-sur-Yvette for their kind hospitality. 
He was  partially supported by an NSERC Discovery Grant. 
N.S. is thankful for the generous support of CNPq, CAPES and FAPERJ (Brazil).


\section{Domino tilings, combinatorial flux and local moves}
\label{section:domino}

A cubiculated region is a connected and oriented manifold $\cR$
(usually with boundary) decomposed into finitely many unit cubes.
A simple example of cubiculated region is a \textit{box}:
$\cR = [0,L] \times [0,M] \times [0,N] \subset \RR^3$,  $LMN$ even;
the unit cubes are
$[a,a+1] \times [b,b+1] \times [c,c+1]$, $(a,b,c) \in \ZZ^3$.
We assume that the interior of a cubiculated region
is as in this example, so that interior edges (resp. vertices)
are surrounded by four (resp. eight) unit cubes.
We also assume that unit cubes are painted black and white,
with adjacent cubes of opposite colors
and the same number of cubes of each color.
A (3D) domino is the union of two unit cubes with a common face,
thus a $2\times 1\times 1$ rectangular cuboid.
A (domino) tiling of $\cR$ is a set of dominoes with disjoint interiors
whose union is equal to $\cR$.

\begin{figure}[ht]
\centering
\def\svgwidth{75mm}
\includegraphics[scale=1.0]{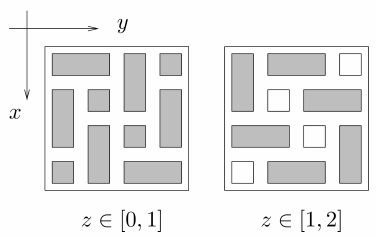}
\caption{A tiling of the box $[0,4]\times [0,4]\times [0,2]$.
The orientation of $\RR^3$ is  important:
the $z$ axis points upward away from the paper.
Examples of dominoes in this tiling are
$[0,1]\times[0,2]\times[0,1]$, $[0,2]\times[0,1]\times[1,2]$
and $[1,2]\times[1,2]\times[0,2]$.
This tiling admits no flips.}
\label{fig:twist2}
\end{figure}

We follow \cite{primeiroartigo}, \cite{saldanha2022} and \cite{FKMS}
in drawing tilings of cubiculated regions by floors,
as in Figure \ref{fig:twist2}.
Vertical dominoes
(i.e., dominoes in the $z$ direction)
appear as two squares, one in each of two adjacent floors;
the top square of a vertical domino
(which, in the figure, appears at the right)
is left unfilled for visual facility.

\begin{definition}
Given a region $\cR$, let $\cT(\cR)$
denote the set of domino tilings of $\cR$.
A \textit{flip} is a local move in $\cT(\cR)$:
two parallel and adjacent (3D) dominoes are removed
and placed back in a different position.
A \textit{trit} is the only local move in $\cT(\cR)$
involving three dominoes which does not reduce to flips.
The three dominoes involved are in three different directions
and fill a $2\times 2\times 2$ box minus two opposite unit cubes.
\end{definition}

Figure \ref{fig:trit} shows in the $3\times 3\times 2$ box
a trit followed by a sequence of flips (the first tiling admits no flips).
We shall come back to this example in Section~\ref{section:extra}.

\begin{figure}[ht]
\begin{center}
\includegraphics[scale=0.25]{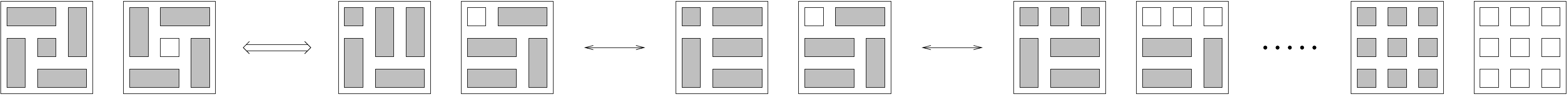}
\end{center}
\caption{Five tilings of the box $[0,3]\times[0,3]\times[0,2]$.
The first move is a trit, the two other moves are flips.
Five more flips take us from the fourth to the fifth tiling.
The first tiling has twist $-1$, the others have twist $0$.}
\label{fig:trit}
\end{figure}

%
%

To apply homology theory we will  use a finer complex structure $\cR^\sharp$.
The vertices of $\cR^\sharp$ are the original vertices of $\cR$,
together with the centers of edges, faces and unit cubes of $\cR$.
Each edge of $\cR$ is decomposed into two edges of $\cR^\sharp$.
Similarly, each square (resp. cube) of $\cR$
is decomposed into four squares (resp. eight cubes) of $\cR^\sharp$.
Notice that the boundary $\partial\cR$ also acquires
the structure of a polyhedral complex  $\partial\cR^\sharp$.

A domino is identified with the sum of two oriented edges in $\cR^\sharp$,
from the center of the black cube to the center of the common face
and from there to the center of the white cube.
A domino tiling $\bt$ of $\cR$ is therefore identified
as $\bt \in C_1(\cR^\sharp)$.
The boundary $\partial\bt \in C_0(\cR^\sharp)$ is
the sum of the centers of the white cubes (of $\cR$)
minus
the sum of the centers of the black cubes.
In particular, for any two tilings $\bt_0, \bt_1$ we have
$\partial\bt_0 = \partial\bt_1$ or, equivalently,
$\bt_1 - \bt_0 \in Z_1(\cR^\sharp)$.
We then write  $[\bt_1 - \bt_0]  \in H_1(\cR) = H_1(\cR;\RR)$;
in the present paper we almost always use coefficients in $\RR$.



Given an initial tiling $\bt_0$,
let $\cT_0(\cR) \subseteq \cT(\cR)$
be the set of tilings $\bt$ belonging to the same `homology class',
i.e. such that 
$[\bt - \bt_0] = 0 \in H_1(\cR)$.
We will say that tilings $\bt$ have the same \textit{flux} 
as $\bt_0$, $\Flux(\bt) = \Flux(\bt_0) $.

In the present paper we define a somewhat different object,
the \textit{relative flux} $\RFlux(\bt) \in H_1(\cR,\partial\cR)$.
It takes values in relative homology
and does not require the choice of a base tiling.
We work in the complex $\cR^\sharp$ and,
given a tiling $\bt$, define $\bt \in C_1(\cR^\sharp)$ as above.
We construct $\bq_1 \in C_1(\cR^\sharp)$ as follows:
 it is the sum over all  black unit cubes of all edges from the cube centers 
to the centers of their faces minus the same sum (of all edges from cube centers to their faces) over all white cubes.
Thus, for every tiling $\bt$ we have that
$\bq_0 = \partial(6\bt - \bq_1) \in C_0(\partial\cR^\sharp)$
is the sum of $+1$ (resp. $-1$) times
the centers of the white (resp. black) squares in $\partial\cR$.
Thus, $6\bt - \bq_1 \in Z_1(\cR^\sharp,\partial\cR^\sharp)$;
we define
\begin{equation}
\label{equation:RFlux}
\RFlux(\bt) = \left[\bt - \frac{\bq_1}{6} \right] \in H_1(\cR,\partial\cR). 
\end{equation}
Inclusion induces a homomorphism $i: H_1(\cR) \to H_1(\cR,\partial\cR)$, while
for any tiling $\bt$ one has 
$i(\Flux(\bt)) = \RFlux(\bt) - \RFlux(\bt_0)$.

\bigbreak

\begin{example}
\label{example:cubiculatedregions}
Already for cubiculated regions $\cR \subset \RR^3$,
there are examples with nontrivial homology or homotopy.
The solid torus
\[ \cR_0 = ([0,12] \times [0,12] \times [0,4]) \smallsetminus
((4,8) \times (4,8) \times [0,4]) \]
is not simply connected, with $\pi_1(\cR_0) = \ZZ$,
$H_1(\cR_0;\RR) = \RR$ and $H_1(\cR_0,\partial\cR_0;\RR) = 0$.
There exist tilings of $\cR_0$ with different values of flux,
but the relative flux is always $0$.

The cube with a hole
\[ \cR_{1,0} = ([0,12] \times [0,12] \times [0,12]) \smallsetminus
((4,8) \times (4,8) \times (4,8)) \]
is simply connected but has nontrivial relative homology:
$H_1(\cR_{1,0},\partial\cR_{1,0}) = \RR$.
For any tiling $\bt \in \cT(\cR_{1,0})$ we have $\RFlux(\bt) = 0$.
For a different cube with hole
\[ \cR_{1,1} = ([0,13] \times [0,13] \times [0,13]) \smallsetminus
((4,9) \times (4,9) \times (4,9)) \]
we have $\RFlux(\bt) \ne 0$ for all $\bt \in \cT(\cR_{1,1})$,
since the numbers of white and black cubes
in the hole surrounded by $\cR_{1,1}$ are different, see \cite{FKMS}.

Quotients and similar identifications
give us other interesting examples of regions.
The quotient
\[ \cR_2 = \RR^3 / (6\ZZ)^3 \]
is a cubiculated region homeomorphic to a $3$-torus $\TT^3 = (\Ss^1)^3$
(see Figures~2, 7 and 8 in \cite{FKMS} for examples of tilings of $\cR_2$);
since $\partial\cR_2 = \emptyset$,
we have $H_1(\cR_2) = H_1(\cR_2,\partial\cR_2)$.

The quotient
\[ \cR_3 = (\RR^2/ (8\ZZ)^2)\times [0,4]\]
is a cubiculated region homeomorphic to $\TT^2 \times [0,1]$,
so that $H_1(\cR_3) = \RR^2$. It admits tilings with  different values of the flux.
We have $H_1(\cR_3,\partial\cR_3) = \RR$, while the map
$i: H_1(\cR_3) \to H_1(\cR_3,\partial\cR_3)$ is the zero homomorphism, $i = 0$
and $\RFlux(\bt) = 0$ for all $\bt \in \cT(\cR_3)$.

We may also remove a few cubes from a larger region;
for instance,
\( \cR_4 = \cR_2 \smallsetminus (2,4)^3 \)
is a $3$-torus with a hole.
\end{example}

\begin{remark}
The examples above demonstrate independence of the assumptions of zero relative flux and two tilings having the same flux.
Indeed,   for $\cR_2 $  the relative and absolute homology groups coincide and one can have two tilings with the same non-zero flux,  $[\bt_1]=[ \bt_0] \not=0 \in H_1(\cR_2) = H_1(\cR_2;\partial\cR_2)$.
On the other hand, all  tilings of $\cR_3 $ have zero relative flux, $\RFlux(\bt) = 0$, while realizing various fluxes in $H_1(\cR_3)= \RR^2$.
\end{remark}


\section{Twist of a tiling}
\label{section:twist}

Recall that given an initial tiling  $\bt_0$ we denote by $\cT_0(\cR)$
 the set of tilings $\bt$ with 
$\Flux(\bt) = [\bt - \bt_0] =0\in H_1(\cR)$.
In general, the {\em twist} of a tiling
is a map $\Tw: \cT_0(\cR) \to \ZZ/(m\ZZ)$
where $m \in \ZZ$, $m \ge 0$.
In the present paper we are  concerned with the case  $m = 0$, 
where the twist map is a function $\Tw: \cT_0(\cR) \to \ZZ$ 
well-defined up to an additive constant.
In this case, one can define the twist of a tiling recursively and,
if needed, using refinements as follows
(we refer to \cite{FKMS} for a general combinatorial definition
of the twist and more details).

Given a cubiculated region $\cR$,
the \textit{refinement} $\cR'$ of $\cR$
is obtained by decomposing each unit cube in $\cR$
into $5\times 5\times 5$ smaller cubes.
Given a tiling $\bt$ of $\cR$,
its refinement $\bt'$ is a tiling of $\cR'$:
each domino $d$ in $\bt$ is decomposed 
into $5\times 5\times 5$ smaller dominoes, all parallel to $d$.
We write $\cR^{(0)} = \cR$ and $\cR^{(k+1)} = (\cR^{(k)})'$
and define $\bt^{(k)} \in \cT(\cR^{(k)})$ in a similar manner.

\begin{theo} {\rm (\cite{FKMS})}
\label{theo:flipsandtrits}
Consider a cubiculated region $\cR$
and two tilings $\bt_0, \bt_1$ of $\cR$.
If $\Flux(\bt_1) = \Flux(\bt_0)$ 
then there exist $k$ such that
the tilings $\bt_0^{(k)}$ and $\bt_1^{(k)}$
can be joined by a finite sequence of flips and trits.
\end{theo}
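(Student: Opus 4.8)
The plan is to separate the two directions of the statement. The easy (and implicit) direction is that both moves preserve the flux: a flip or a trit is supported in a $2\times 2\times 2$ box (or smaller), which is contractible, so performing the move alters the $1$-chain $\bt\in C_1(\cR^\sharp)$ by a cycle bounding inside that box, hence null-homologous in $\cR^\sharp$. Thus $\Flux$ is constant along any flip/trit sequence, and the hypothesis $\Flux(\bt_1)=\Flux(\bt_0)$ is necessary. The substance of the theorem is the converse, so from now on I would fix $\Flux(\bt_1)=\Flux(\bt_0)$ and aim to connect the two tilings after refinement. I would also record that flux and twist are compatible with refinement, so the hypotheses pass to every $\bt_i^{(k)}$.

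The first reduction is to peel off the twist, the finer invariant that flips alone cannot change. Flips preserve the twist while a single trit changes it by $\pm 1$ (in the $m=0$ case at hand). Hence a preliminary sequence of trits can equalize the twists of $\bt_0$ and $\bt_1$ within their common flux class, after which it suffices to prove a pure \emph{flip-connectivity lemma}: if $\Flux(\bt_1)=\Flux(\bt_0)$ and $\Tw(\bt_1)=\Tw(\bt_0)$, then for some $k$ the refinements $\bt_0^{(k)}$ and $\bt_1^{(k)}$ are joined by flips alone. Everything then comes down to this lemma.

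To attack the lemma I would pass to the difference cycle $\gamma=\bt_1^{(k)}-\bt_0^{(k)}\in Z_1$, which under the equal-flux assumption is null-homologous, so $\gamma=\partial\sigma$ for a $2$-chain $\sigma$ in the refined complex. Fixing a reference (``brick-wall'') tiling $\bt_\ast$ in the flux class, I would reduce to connecting each of $\bt_0^{(k)},\bt_1^{(k)}$ to $\bt_\ast$ by a straightening argument: decompose $\sigma$ into elementary unit squares, each of whose boundary is a small square loop contributing to $\gamma$, and realize the removal of each such loop by a single flip. The role of refinement is to manufacture slack — after subdividing every cube into $5\times 5\times 5$, each domino becomes a thick slab of parallel dominoes, so the local modifications can be carried out one at a time in pairwise-disjoint neighborhoods without obstructing one another; the odd factor $5$ also preserves the black/white coloring and the orientation data.

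I expect the main obstacle to be exactly this geometric realization step. An algebraically null-homologous difference does not automatically split into flip-realizable loops, since a flip demands two genuinely parallel, adjacent dominoes, whereas an arbitrary elementary square of $\sigma$ need not sit in such a configuration. The heart of the proof is therefore to show that, after enough refinements, the unwinding of $\gamma$ can always be routed through configurations in which each elementary step is an admissible flip — equivalently, that the flip-graph on refined tilings of fixed flux and twist is connected. I would organize this as an induction on a complexity measure of $\sigma$ (its number of elementary squares, or an ``area''), peeling off boundary loops that can be flipped away, verifying that the measure strictly decreases, and invoking a further refinement whenever no admissible flip is available at the current scale.
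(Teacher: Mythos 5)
First, note that the paper does not prove this statement at all: it is imported verbatim from \cite{FKMS}, where it is one of the two main results of a long combinatorial paper. So your attempt has to stand on its own, and it does not.

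The central gap is the ``flip-connectivity lemma'' and your proposed proof of it. The strategy --- write $\gamma=\bt_1^{(k)}-\bt_0^{(k)}=\partial\sigma$, decompose $\sigma$ into elementary squares, flip away one square loop at a time, and refine further whenever you get stuck --- makes no use whatsoever of the hypothesis $\Tw(\bt_0)=\Tw(\bt_1)$. If an argument of this shape worked, it would prove that \emph{any} two tilings of equal flux become flip-connected after sufficient refinement. That statement is false: the twist is invariant under flips and under refinement (items 1 and 3 of Theorem~\ref{theo:Fluxtwist}), so tilings of equal flux but different twist --- for instance the first and last tilings of the $3\times3\times2$ box in Figure~\ref{fig:trit}, of twists $-1$ and $0$ --- can never be joined by flips alone, at any refinement level. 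Hence the ``geometric realization step'' you flag as the main obstacle is not a technical wrinkle to be absorbed by an area-induction: it is exactly where the argument must break, and the obstruction (the twist) is invisible to the homology of the difference cycle, being a linking-type invariant rather than a homological one. A correct proof must identify where equality of twist enters, and your sketch has no place for it to do so.

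There are also structural problems with the reduction itself. The lemma you reduce to (equal flux and equal twist imply flip-connectivity after refinement) is the \emph{other} main theorem of \cite{FKMS}, the finer and harder of the two; reducing the flips-and-trits statement to it is legitimate only if you can prove it, which is what fails above. The preliminary step ``a sequence of trits can equalize the twists'' also needs its own argument: you must show that, after refinement and suitable flips, a trit of either prescribed sign is always available, which you assert but do not prove. Moreover, within this paper's presentation the well-definedness of $\Tw$ (Theorem~\ref{theo:Fluxtwist}) is itself deduced using the very connectivity theorem you are proving, so invoking the twist's properties requires taking the independent combinatorial construction of \cite{FKMS} as given to avoid circularity. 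Finally, your parenthetical restriction to ``the $m=0$ case at hand'' is not available: the theorem concerns arbitrary cubiculated regions, where the twist may only be defined modulo $m$.
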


Define the set of all refinements of $\cT_0(\cR)$ by 
\[ \cT_0(\cR^{(\ast)}) =
\bigsqcup_{k \in \NN} \cT_0(\cR^{(k)}) \]
where $\cT_0(\cR^{(k)}) \subseteq \cT(\cR^{(k)})$
is the subset of tilings $\bt$ with $[\bt - \bt_0^{(k)}] = 0 \in H_1(\cR)$.
One can interpret  $\cT_0(\cR^{(\ast)})$ as the set of vertices
of an infinite graph:
two vertices $\bt_0$ and $\bt_1$ are connected by an edge if the tilings 
$\bt_0$ and $\bt_1$ differ by a flip, a trit or by one step of a refinement. The graph $\cT_0(\cR^{(\ast)})$ is connected, as 
follows directly from Theorem~\ref{theo:flipsandtrits}.

\smallskip

The following result 
gives us a recursive definition of twist in $\cT_0(\cR)$. It is proved in \cite{FKMS} in different terms
and (in certain cases before that) in \cite{segundoartigo}. Below we provide a different proof.

\begin{theo-def} {\rm (cf. \cite{FKMS})}
\label{theo:Fluxtwist}
 Consider a cubiculated region $\cR$, with an initial tiling 
$\bt_0$. If the relative flux of $\bt_0$ is zero then
there exists a function $\Tw: \cT_0(\cR^{(\ast)}) \to \ZZ$
(unique up to an additive constant) with the following properties:
\begin{enumerate}
\item{If two tilings $\bt_a, \bt_b \in \cT_0(\cR^{(\ast)})$
are joined by a flip then $\Tw(\bt_b) = \Tw(\bt_a)$.}
\item{If  $\bt_a$ and $\bt_b$ are joined by a trit
then $\Tw(\bt_b) = \Tw(\bt_a) \pm 1$;
sign is given by the orientation of the trit.}
\item{If $\bt'$ is the refinement of $\bt \in  \cT_{0}(\cR^{(\ast)})$
then $\Tw(\bt') = \Tw(\bt)$.}
\end{enumerate}
The function $\Tw$ is called the {\rm twist function}.
\end{theo-def}


We say that a trit from $\bt_a$ to $\bt_b$ is
\textit{positive} (respectively, \textit{negative})
if $\Tw(\bt_b) - \Tw(\bt_a) = +1$ (respectively, $-1$).

\begin{remark}
If $\RFlux(\bt_0)\not=0$, there is $m>0$ such that the twist function  becomes 
a map $\Tw: \cT_0(\cR) \to \ZZ/(m\ZZ)$, which, in addition to the properties listed above satisfies the following.
There exist a finite sequence of
tilings $\bt_0, \bt_1, \ldots, \bt_\ell = \bt_0$ in $\cT_{0}(\cR^{(k)})$
(for some $k \ge 0$)
and a sequence of integers $t_0, t_1, \ldots, t_{\ell}$ such that for every $i$, $\bt_i$ and $\bt_{i+1}$ are joined by a flip or trit,
$\Tw(\bt_i) = t_i \bmod m$, and $t_\ell = t_0 + m$.
\smallskip

One should stress that all known definitions of 
the twist of a tiling involve a significant combinatorial part
and are relatively complicated, cf. \cite{FKMS};
one of the main aims of the present paper
is to give an alternative geometric description.
\end{remark}


\section{Rotation class and helicity of vector fields}
\label{section:rotation}

\subsection{Relative rotation class}
Let $M$ be a manifold with boundary $\partial M$ and volume form $\mu$. Consider a divergence-free vector field $\xi$
on $M$ and not necessarily tangent to its boundary. 

\begin{definition}{\rm (\cite{Arnold69})}
Consider the 2-form $\omega_\xi:=i_\xi\mu$.
It is closed, since $\xi$ is divergence-free, and hence it defines the 
cohomology class $[\omega_\xi]\in H^2(M)$. By the Lefschetz isomorphism
(the Poincar{\'e} isomorphism for manifolds with boundary)
$ H^2(M) \simeq H_1(M, \partial M)$.
The relative homology class defined by $[\omega_\xi]$ in $H_1(M, \partial M)$
is called a {\it relative rotation class} of the field $\xi$.

If $\partial M=\emptyset$ or $\xi$ is tangent to $\partial M$,
then $\omega_\xi|_{\partial M}=0$ and hence the corresponding
{\it rotation class} of the vector field $\xi$ 
is an element in $H_1(M) \simeq H^2(M, \partial M)$.
\end{definition}

This definition holds for $M$ of any dimension,
although our main application is in 3D.


\subsection{Helicity of fields and pipes}
\label{section:helipipe}

Now we assume that $M \subset {\RR}^3$ is a three-dimensional domain
and the field $\xi$ is divergence-free, 
tangent to $\partial M$, and has zero rotation class,
i.e. it is {\it exact} or {\it null-homologous}). 

\begin{definition}
\label{definition:Moffatt}
(\cite{Moffatt69})
The  {\it helicity}  of  a null-homologous  field $\xi$ 
in a domain $M \subset {\RR}^3$ is the number
$$
\Hel(\xi):=
\int \limits_{M} (\xi , {\rm{curl}}^{-1}\xi )\; d^3x,
$$
where the vector field
$\rm{curl}^{-1}\xi $ is a divergence-free vector potential of the field
$\xi$ (which exists since $\xi$ is exact), i.e.,   $\nabla \times (\rm{curl}^{-1}\xi)=\xi$ and 
$\rm{ div }(\rm{curl}^{-1}\xi)=0$.
\end{definition}

Let a divergence-free field $\xi$ be
confined to two narrow linked flux tubes.
Its helicity can be found explicitly as follows.
Suppose that the tube 
cores are closed curves $C_1$ and $C_2$, the  fluxes of the
field in the tubes are $\Flux_1$ and $\Flux_2$ (see Figure \ref{fig:link}).
The curves $C_i$ are oriented so that $\Flux_i =Q_i\ge 0$.
Assume also that there is no net
twist within each tube or, more precisely, that
the field trajectories foliate each of the
tubes into  pairwise unlinked circles and the periods of those trajectories are equal.
One can show 
that the helicity invariant of such a field is
given by
$$
\Hel(\xi ) = 2\,{\lk}(C_1,C_2)\cdot \Flux_1\cdot \Flux_2, 
$$
where
${\lk}(C_1,C_2)$ is the (Gauss) {\it linking number} of $C_1$ and $C_2$,
which explains the term ``helicity'' coined in \cite{Moffatt69},
as the measure of coiling one curve about the other.
Recall, that the  number ${\lk}(C_1,C_2)$ for two oriented closed curves
is the signed number of the intersection points
of one curve with an arbitrary oriented surface
spanning the other curve.

\begin{figure}[ht]
\begin{center}
\includegraphics[scale=0.7]{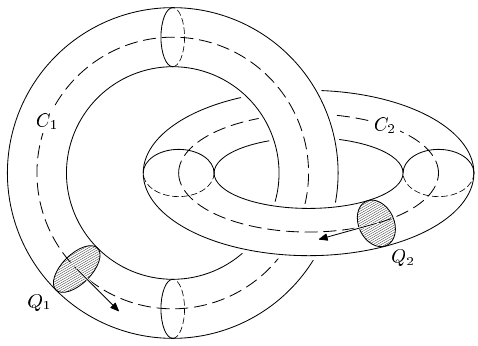}
\end{center}
\caption{Two linked tubes:
the solid tori are thin tubular neighborhoods of the curves $C_i$.
A few cross-sections are shown, while
the vector field is  transversal to these sections.}
\label{fig:link}
\end{figure}

Suppose now that a divergence-free field $\xi$ is 
confined to several narrow linked oriented tubes
with cores given by closed curves $C_i$ and  fluxes $\Flux_i \ge 0$.
We still assume that  the field trajectories foliate each of the
tubes into topological circles.
However we do not assume any longer that these field trajectories (circles)
inside the same tubes are pairwise unlinked, while their periods can now be arbitrary.
The corresponding linking of circles inside the $i$th tube
(given by the linking number of the core curve $C_i$
with any satellite  curve)
is called the self-linking ${\slk}(C_i)$
of the curve $C_i$ in the corresponding pipe.

\begin{prop}
\label{prop:Hel}
The   helicity  of such a field $\xi$ is
\begin{equation}
\label{equation:Hel}
\Hel(\xi ) =
2\sum_{i<j} {\lk}(C_i,C_j)\cdot \Flux_i\cdot\Flux_j
+ \sum_i {\slk}(C_i) (\Flux_i)^2.
\end{equation}
\end{prop}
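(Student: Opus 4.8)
The plan is to compute the helicity by using the bilinearity of the helicity pairing $\Hel(\xi) = \int_M (\xi, \operatorname{curl}^{-1}\xi)\,d^3x$ together with the known two-tube formula stated just above the proposition. I would begin by recalling that since the field $\xi$ is supported on disjoint tubes, we may write $\xi = \sum_i \xi_i$ where each $\xi_i$ is supported on the $i$th tube, has flux $\Flux_i$ in it, and has trajectories foliating the tube into circles. Because $\operatorname{curl}^{-1}$ is a linear operator (once a normalization such as the divergence-free gauge is fixed), we obtain the expansion
\[
\Hel(\xi) = \sum_{i,j} \int_M (\xi_i, \operatorname{curl}^{-1}\xi_j)\,d^3x = \sum_i \int_M (\xi_i,\operatorname{curl}^{-1}\xi_i)\,d^3x + \sum_{i\ne j} \int_M (\xi_i,\operatorname{curl}^{-1}\xi_j)\,d^3x,
\]
so the proof reduces to identifying the diagonal and off-diagonal terms separately.

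For the off-diagonal (mutual) terms, I would invoke the already-stated two-tube formula: each unordered pair $\{i,j\}$ contributes the cross terms $\int (\xi_i, \operatorname{curl}^{-1}\xi_j) + \int (\xi_j, \operatorname{curl}^{-1}\xi_i)$, and by the symmetry of the helicity pairing (Green's formula shows $\int (\xi_i,\operatorname{curl}^{-1}\xi_j) = \int (\xi_j,\operatorname{curl}^{-1}\xi_i)$ up to boundary terms that vanish here) this equals $2\,\lk(C_i,C_j)\,\Flux_i\,\Flux_j$. Summing over $i<j$ gives the first sum in \eqref{equation:Hel}. The key observation is that the mutual helicity of two tubes depends only on their cores' linking and their fluxes, not on the internal twisting, so the assumption that trajectories are merely circles (without the unlinkedness hypothesis) does not affect these terms.

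For the diagonal terms, I would treat a single tube as the essential new case. The self-interaction $\int_{\text{tube}} (\xi_i, \operatorname{curl}^{-1}\xi_i)\,d^3x$ is exactly where the internal structure enters: when the trajectories inside the $i$th tube are mutually linked, this integral computes the self-linking. The clean way to see this is to model the tube as a foliation by satellite curves, each carrying an infinitesimal flux, and to apply the two-curve linking formula to pairs of satellites; integrating against the flux distribution and using that the satellite-core linking is constant and equal to $\slk(C_i)$ yields $\slk(C_i)\,(\Flux_i)^2$. I would defer the careful version of this decomposition to the appendix (which the excerpt announces derives precisely this formula), and here just state that the single-tube helicity equals $\slk(C_i)(\Flux_i)^2$ under the stated hypotheses.

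The main obstacle is making the diagonal term rigorous: the naive ``pairs of satellites'' argument double-counts and must be normalized so that the self-linking appears with coefficient one rather than two, which is exactly why the self-term has $(\Flux_i)^2$ while the mutual terms carry a factor of $2$. The subtlety is that $\slk(C_i)$ is defined as the linking of the core with a \emph{single} satellite (a framing), so one must verify that integrating the pairwise satellite linkings over the full flux profile, with the correct symmetric normalization, collapses to $\slk(C_i)(\Flux_i)^2$ and not $2\slk(C_i)(\Flux_i)^2$. Once this normalization is pinned down—most transparently by reducing to the case of a tube whose satellites all have the same linking with the core—the remaining steps are the routine assembly of diagonal and off-diagonal contributions into \eqref{equation:Hel}.
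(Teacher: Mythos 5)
Your proposal is correct in substance, but it follows a genuinely different route from the paper's. The paper's master tool is Arnold's theorem that helicity equals the double integral of the \emph{asymptotic} linking number of trajectories, $\Hel(\xi)=\iint_{M\times M}\lk(x,y)\,\mu_x\mu_y$; the pairwise splitting then happens at the level of that integral, the cross terms are handled by Arnold's average-linking/Seifert-surface argument, and the diagonal (single-tube) terms are identified with the Calabi invariant of the Poincar\'e map of the tube via Fathi's asymptotic winding number and the Gambaudo--Ghys/Shelukhin theorem. You instead expand the quadratic form $\int_M(\xi,\operatorname{curl}^{-1}\xi)\,d^3x$ by bilinearity of $\operatorname{curl}^{-1}$, treat cross terms by the two-tube Moffatt formula plus symmetry of the pairing, and treat diagonal terms by decomposing the tube into flux filaments and integrating pairwise linking numbers. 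Your approach is more elementary and entirely adequate under the proposition's hypothesis that trajectories foliate each tube into circles; the paper's ergodic-theoretic machinery buys generality that your argument does not: it survives when trajectories are not closed (where $\slk$ must be replaced by the integrated asymptotic winding number) and makes the independence of periods automatic, which is exactly the point of the paper's closing remark in the appendix. Two refinements would tighten your sketch. First, your worry about a spurious factor of $2$ in the diagonal term dissolves once you note that the filament decomposition of $\int_{T_i}(\xi_i,\operatorname{curl}^{-1}\xi_i)$ is a double integral over \emph{ordered} pairs of leaves, and the set of pairs of points lying on the same leaf has measure zero; with constant pairwise linking this gives $\slk(C_i)(\Flux_i)^2$ on the nose, while the cross terms pick up the factor $2$ simply because both ordered pairs $(i,j)$ and $(j,i)$ occur. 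Second, the constancy of the pairwise leaf linking (and its agreement with the core--satellite linking defining $\slk(C_i)$) is not an extra assumption: it follows from integrality of the linking number and connectedness of the space of pairs of distinct leaves. With these two points made explicit, your proof is complete under the stated hypotheses.
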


We recall the proof of this folklore formula in Appendix, and will use it for  computations related to tilings.
Note that self-linking cannot be defined for an isolated closed curve
(since there is no canonical choice of a satellite curve),
but only for a curve with a framing, which delivers such a choice.
For a field trajectory in a pipe,
neighbouring trajectories provide the framing.

\medskip

\begin{remark}
While helicity was defined above by using the Riemannian metric on $M$, 
it is actually a  topological characteristic of a divergence-free vector field,
depending only on the choice of a volume form on the manifold. Namely, 
consider a manifold  $M$ (possibly with boundary) 
with a volume form $\mu$, and let $\xi$ be a null-homologous vector 
field on $M$ (tangent to the boundary). 
The divergence-free condition means that 
the Lie derivative of $\mu$ along $\xi$ vanishes: $L_{\xi}\mu=0$, or, which 
is the same, the substitution  $i_\xi\mu=:\omega_\xi$ 
of the field $\xi$ into the 3-form $\mu$ is a closed 2-form:
$d\omega_\xi=0$. If $\xi$ is moreover null-homologous, then $\omega_\xi$ is actually an exact 2-form: 
$\omega_\xi=d\alpha$ for some  1-form $\alpha$, called a potential.
(On a simply connected manifold $M$ any divergence-free vector field is null-homologous.)
\end{remark}

\begin{definition} {\rm (\cite{Arnold73})}
\label{definition:Arnold}
The {\it  helicity }  $ \Hel(\xi)$ of a
null-homologous field $\xi$ on a   three-dimensional manifold $M$
equipped with a
volume element $\mu$ is the integral of the wedge product of the
form $\omega_\xi:=i_\xi\mu$ and   its  potential:
$$
\Hel(\xi)
= \int_M d\alpha\wedge \alpha, {~{\rm where}~} d\alpha =
\omega_\xi.
$$
As discussed, this generalizes Definition~\ref{definition:Moffatt}.
\end{definition}
 
An immediate consequence of this purely topological (i.e. metric-free) 
definition is the following Arnold's theorem: 
The helicity $\Hel(\xi)$ is preserved under the action on $\xi$
of a volume-preserving diffeomorphism of $M$. 
In this sense $\Hel(\xi)$ is a topological invariant: it can be defined 
without coordinates or a choice of metric, and hence 
every volume-preserving diffeomorphism carries
a field $\xi$ into a field with the same helicity. The 
physical significance of helicity is  due to the fact that
it appears  as a conservation law in both fluid mechanics and magnetohydrodynamics: 
Kelvin's law implies the invariance of helicity of the vorticity field for an ideal fluid motion.
\bigskip

\subsection{Relative helicity for vector fields}
\label{section:relhel}

First we recall the definition of relative helicity  for vector fields
(elaborating the definitions in \cite{BF} and \cite{AK}).
Suppose that a domain in the space $\RR^3$ 
(or a closed oriented manifold $M^3$) 
is split into two regions $A$ and $B$
separated by a boundary surface $S$.
Assume further that  two divergence-free vector
fields $\xi$ and $ \eta$   in $A$ coincide on the boundary $S$ and have
the same extension $\zeta$ into the region $B$. 
Call the extended fields in $M$  respectively 
$\tilde \xi$ and $\tilde \eta$.
Abusing notation we will 
denote them as the sums
$\tilde \xi=\xi+\zeta$ and $\tilde \eta=\eta+\zeta$, where
$\xi$, $\eta$ and $\zeta$ are
regarded as the (discontinuous) vector fields in the entire manifold $M$
with supports $\supp\xi,\supp\eta\subseteq A$ and 
$\supp\zeta\subseteq B$.
(Alternatively one can modify the fields $\xi$, $\eta$, $\zeta$
in a narrow neighborhood of the boundary $S$ to avoid discontinuity;
this will not affect the argument below.)

Assume that both the extended fields $\tilde \xi$ and $\tilde \eta$
are null-homologous in $M$ and tangent to $\partial M$,
while we do not impose any restrictions on the topology
of the sets $A$, $B$, or $M$.

\bigbreak

\begin{definition}
\label{definition:RHel}
The difference $\Delta\Hel(\xi,\eta) := \Hel( \tilde \xi)- \Hel(\tilde  \eta)$ 
measures the {\em relative helicity} of the fields $\xi$ and $\eta$ in $A$. 
\end{definition}

\begin{theo}
\label{theo:AB}
The relative helicity
$\Delta\Hel(\xi,\eta) = \Hel( \tilde \xi)- \Hel(\tilde  \eta)$
is independent of their common extension $\zeta$ in the region $B$. 
\end{theo}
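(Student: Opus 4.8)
The plan is to show that the difference $\Hel(\tilde\xi) - \Hel(\tilde\eta)$ depends only on the fields $\xi,\eta$ in $A$ and not on the common extension $\zeta$ in $B$. First I would express each helicity using the metric-free definition: choose potentials $\tilde\alpha,\tilde\beta$ with $d\tilde\alpha = \omega_{\tilde\xi} = i_{\tilde\xi}\mu$ and $d\tilde\beta = \omega_{\tilde\eta} = i_{\tilde\eta}\mu$, so that $\Hel(\tilde\xi) = \int_M d\tilde\alpha \wedge \tilde\alpha$ and similarly for $\tilde\eta$. Since $\tilde\xi$ and $\tilde\eta$ agree with the \emph{same} field $\zeta$ on $B$, the 2-forms $\omega_{\tilde\xi}$ and $\omega_{\tilde\eta}$ coincide on $B$; their difference $\omega_{\tilde\xi} - \omega_{\tilde\eta} = \omega_\xi - \omega_\eta$ is a closed 2-form supported in $A$ (more precisely, vanishing identically on $B$).

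Next I would take a \emph{second} extension $\zeta'$ of the same boundary data, producing fields $\tilde\xi' = \xi + \zeta'$ and $\tilde\eta' = \eta + \zeta'$, and aim to prove $\Hel(\tilde\xi) - \Hel(\tilde\eta) = \Hel(\tilde\xi') - \Hel(\tilde\eta')$. The key observation is that $\tilde\xi - \tilde\xi' = \zeta - \zeta' =: \chi$ equals $\tilde\eta - \tilde\eta'$; this field $\chi$ is divergence-free, supported in $B$, and vanishes near $S$ and $\partial M$. Writing $\omega_\chi = i_\chi\mu$, I would use the bilinear expansion of helicity. Concretely, helicity is the quadratic form associated to the symmetric bilinear pairing $\cH(\omega_1,\omega_2) = \int_M \alpha_1 \wedge \omega_2 = \int_M \alpha_2 \wedge \omega_1$ (the two being equal by Stokes, using tangency/null-homology so the boundary term drops), where $d\alpha_i = \omega_i$. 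Then $\Hel(\tilde\xi) = \cH(\omega_{\tilde\xi},\omega_{\tilde\xi})$, and expanding
\[
\Hel(\tilde\xi) - \Hel(\tilde\xi') = \cH(\omega_{\tilde\xi} - \omega_{\tilde\xi'},\, \omega_{\tilde\xi} + \omega_{\tilde\xi'}) = \cH(\omega_\chi,\, \omega_{\tilde\xi} + \omega_{\tilde\xi'}),
\]
and the same computation with $\eta$ in place of $\xi$ gives $\Hel(\tilde\eta) - \Hel(\tilde\eta') = \cH(\omega_\chi,\, \omega_{\tilde\eta} + \omega_{\tilde\eta'})$.

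Subtracting, the desired invariance reduces to showing
\[
\cH\!\left(\omega_\chi,\; (\omega_{\tilde\xi} - \omega_{\tilde\eta}) + (\omega_{\tilde\xi'} - \omega_{\tilde\eta'})\right) = 0.
\]
Here the point is that $\omega_{\tilde\xi} - \omega_{\tilde\eta} = \omega_\xi - \omega_\eta$ is supported in $A$ (vanishing on $B$), while $\omega_\chi$ is supported in $B$ (vanishing on a neighborhood of $S$ and in $A$). Using the symmetric form of the pairing, $\cH(\omega_\chi, \omega_\xi - \omega_\eta) = \int_M \alpha_\chi \wedge (\omega_\xi - \omega_\eta)$, where $\omega_\xi - \omega_\eta$ is closed and supported in $A$. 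I would argue that on $B$ the form $\chi$ is itself null-homologous and can be chosen with a potential $\alpha_\chi$ supported in $B$ (this is where the real work lies), so that the integrand $\alpha_\chi \wedge (\omega_\xi - \omega_\eta)$ vanishes pointwise — one factor lives in $A$, the other in $B$.

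The main obstacle will be the last step: ensuring that the disjoint-support cancellation is rigorous despite the forms being only piecewise-defined across $S$. The cleanest route is to smooth the discontinuity in a collar neighborhood of $S$ (as the paper's parenthetical remark suggests) and to verify that $\chi = \zeta - \zeta'$, being an \emph{exact} difference of two extensions of identical boundary data, admits a potential vanishing outside $B$; then Stokes on $A$ and on $B$ separately, with matching boundary contributions along $S$ cancelling because the relevant restriction vanishes there, finishes the argument. An alternative, which I would present as the backup, is to compute $\Delta\Hel$ directly as $\int_M (\tilde\alpha + \tilde\beta)\wedge(\omega_{\tilde\xi} - \omega_{\tilde\eta})$ and split the integral over $A$ and $B$, observing that on $B$ the integrand reduces to a term independent of the extension because $\omega_{\tilde\xi} - \omega_{\tilde\eta}$ vanishes there identically.
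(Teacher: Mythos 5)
Your setup and bilinear reduction are correct and in fact mirror the paper's first steps: writing helicities via potentials, noting that $\omega_{\tilde\xi}-\omega_{\tilde\eta}=\omega_\xi-\omega_\eta$ is closed and supported in $A$, and reducing the whole theorem to the vanishing of one cross term, $\cH(\omega_\chi,\,\omega_\xi-\omega_\eta)=0$. The genuine gap is the mechanism you propose for that vanishing, namely the claim that $\chi=\zeta-\zeta'$ ``admits a potential vanishing outside $B$'', so that the integrand dies pointwise by disjointness of supports. This claim is false in general. Exactness of $\omega_\chi$ on $M$ (which does hold, since $\omega_{\tilde\xi}$ and $\omega_{\tilde\xi'}$ are both exact) does not give a potential vanishing on $A$: the obstruction is the class of $\omega_\chi$ in $H^2(B,\partial B)\cong H_1(B)$, which need not vanish. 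Concretely, let $A$ be an unknotted solid torus in $M=\Ss^3$ with core curve $C$, take zero boundary data on $S$, let $\zeta=0$, and let $\zeta'$ be a thin closed flux tube inside $B$ whose core links $C$ once. Then for \emph{any} global potential $\alpha_\chi$ of $\omega_\chi$, Stokes applied to a disk spanning $C$ gives $\oint_C\alpha_\chi=\pm\Flux\neq 0$, so $\alpha_\chi$ cannot vanish on $A$, no matter how one smooths across $S$. The disjoint-support cancellation is simply not available.

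What actually kills the cross term --- and what the paper does --- uses only that $\alpha_\chi$ is \emph{closed} on $A$ (since $d\alpha_\chi=\omega_\chi=0$ there). When $A$ is simply connected one writes $\alpha_\chi=dh$ on $A$, and Stokes gives $\int_A dh\wedge(\omega_\xi-\omega_\eta)=\int_S h\,(\omega_\xi-\omega_\eta)$, which vanishes because $\xi=\eta$ on $S$; note this is exactly where the boundary-matching hypothesis enters, a hypothesis your pointwise argument never invokes --- itself a warning sign. For non-simply-connected $A$ and $B$ (and the theorem explicitly allows arbitrary topology) a further device is needed: the paper fixes a reference closed form $\omega_0$ supported in $B$, writes the varying part of the extension as an exact form $w$ supported in $B$, and reruns the computation using that $\omega_\xi-\omega_\eta$ is exact in $M$ while $d^{-1}w$ is closed on $A$. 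Your backup plan runs into the same wall: in $\int_M(\tilde\alpha+\tilde\beta)\wedge(\omega_\xi-\omega_\eta)$ the integral over $B$ is zero outright, but the integral over $A$ still depends on the extension through $\tilde\alpha|_A$ and $\tilde\beta|_A$, which change by a closed and generally non-exact $1$-form when $\zeta$ is replaced by $\zeta'$; showing that this dependence cancels is precisely the step above, which the proposal does not carry out.
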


\begin{proof}
Define the (closed) two-forms $\alpha, \beta,$ and $\omega$ 
by substituting   the vector fields $\xi,\eta,$ and $\zeta$ 
into  the volume form $\mu$ on $M$:
$i_\xi\mu=\alpha$, $i_\eta\mu=\beta$, and $i_\zeta\mu=\omega$.
Then one has to show that the difference
\[
\Delta\Hel = \Hel(\tilde\xi)-\Hel(\tilde\eta)
=\int_M (\alpha+\omega)\wedge 
d^{-1}(\alpha+\omega)
-\int_M (\beta+\omega)\wedge d^{-1}(\beta+\omega)
\]
does not depend on $\omega$.
Note that the 2-forms $\alpha+\omega$ and $\beta+\omega$ are exact in $M$
(because of the assumption that  the fields
$\tilde \xi$ and $\tilde \eta$ are null-homologous)
and taking 
$d^{-1}$ of them makes sense. 

Assume first that the regions $A$ and $B$ are both simply-connected.
One readily obtains
\[
\Delta\Hel =\int_M \alpha\wedge d^{-1}\alpha-\int_M \beta\wedge d^{-1}\beta 
+\int_M (\alpha-\beta)\wedge d^{-1}\omega
+\int_M \omega\wedge d^{-1}(\alpha-\beta).
\]
Here $d^{-1}$ applied to  a discontinuous 2-form is a continuous 
 1-form (the ``form-potential").
The terms in $\Delta\Hel $ containing 
$\omega$  
are $\int_M (\alpha-\beta)\wedge d^{-1}\omega
+\int_M \omega\wedge d^{-1}(\alpha-\beta)$,
and we want to show that their contribution vanishes. 

Integrating by parts
one of the terms, we  come  to 
$2\int_M (\alpha-\beta)\wedge d^{-1}\omega $, which,   in turn,
 is equal to  
$2\int_A (\alpha-\beta)\wedge d^{-1}\omega $, 
since $\supp(\alpha-\beta)\subseteq A$.

On the other hand, in the domain $A$ the 1-form  $d^{-1}\omega $ is the 
differential  of a function, 
$d^{-1}\omega =dh$.
Indeed,  it is closed
(the differential $d(d^{-1}\omega)=\omega$ vanishes in $A$
due to the condition 
on $\supp\zeta=\supp\omega\subseteq B$),
and hence it is exact in the  simply connected region $A$.
Hence,
\[
2\int_A (\alpha-\beta)\wedge d^{-1}\omega=2\int_A (\alpha-\beta)\wedge dh=
2\int_S h(\alpha-\beta)=0,
\]
where the last equality is due to the assumption on  the  
identity of the fields 
$\xi$ and $\eta$ on the boundary $S$.  
This proves that $\Delta\Hel $ is unaffected by the choice of 
the extension $\zeta$. 

Now consider the case of arbitrary domains $A$ and $B$.
In the latter case, the 2-form $\omega$ is not exact, but only closed
(and such that $\alpha+\omega$ and $\beta+\omega$ are exact,
i.e. $[\alpha+\omega]=[\beta+\omega]=0\in  H^2(M)$).
To emphasize the ambiguity in the choice of $\omega$ we represent is as   $\omega=\omega_0+\gamma$,
where we fix some `reference' closed 2-form $\omega_0$, while an arbitrary 2-form $\gamma$ is exact,
and both  $\omega_0$ and $\gamma$ have their support in $B$.
Now the difference we are studying is
\[
\Hel(\tilde\xi)-\Hel(\tilde\eta) =\int_M (\alpha+\omega_0+\gamma)\wedge 
d^{-1}(\alpha+\omega_0+\gamma)
-\int_M (\beta+\omega_0+\gamma)\wedge d^{-1}(\beta+\omega_0+\gamma)\,,
\]
and we would like to show
that it is independent of the choice of an exact 2-form $\gamma$.
We claim that this is evident 
once we introduce new forms
$\bar\alpha=\alpha+\omega_0$ and $\bar\beta=\beta+\omega_0$
and rewrite the difference as 
\[ \Hel ( \tilde \xi)-\Hel (\tilde  \eta)
=\int_M (\bar\alpha+\gamma)\wedge 
d^{-1}(\bar\alpha+\gamma)
-\int_M (\bar\beta+\gamma)\wedge d^{-1}(\bar\beta+\gamma)\,, \]
thus mimicking the expression above for simply-connected regions.  
Indeed, now it boils down to the same computation leading to the form
\[
2\int_A (\bar\alpha-\bar\beta)\wedge d^{-1}\gamma=
2\int_A (\alpha-\beta)\wedge d^{-1}\gamma=0\,.
\]
The latter expression vanishes for the same reason as above:
$\alpha-\beta$ has the support in $A$ and it is exact
(since both $\alpha+\omega$ and $\beta+\omega$ are exact),
while $d^{-1}\gamma$ is closed in $A$.
This concludes the proof that relative helicity
is well-defined in the general case.
\end{proof}



\section{Pipes, fluxes, and shells}
\label{sect:all-pipes}
In this section we present the constructions
taking us from the context of cubiculated regions and domino tilings
to that of vector fields and helicities.

\subsection{The five pipes construction}
\label{section:fivepipes}

In the first construction,
for a domino tile $d \subset \RR^3$,
we construct a specific divergence-free vector field $\xi_d$ in $d$.
The vector field $\xi_d$ is confined to five narrow flux tubes.
The tubes are the boundaries of tubular neighborhoods
of smooth curves approximating the polygonal lines.
This is therefore the situation
discussed in Section \ref{section:helipipe}.
If the domino is $d = [0,2] \times [0,1] \times [0,1]$,
the tubes are thin neighborhoods
of the following five oriented polygonal lines,
also shown in Figure~\ref{fig:5pipe}:
\begin{gather*}
\textstyle
(0,\frac12,\frac12)\longrightarrow(2,\frac12,\frac12), \\
\textstyle
(\frac12,0,\frac12)\to(\frac12,\frac14,\frac12)\to
(\frac32,\frac14,\frac12)\to(\frac32,0,\frac12), \\
\textstyle
(\frac12,\frac12,0)\to(\frac12,\frac12,\frac14)\to
(\frac32,\frac12,\frac14)\to(\frac32,\frac12,0), \\
\textstyle
(\frac12,1,\frac12)\to(\frac12,\frac34,\frac12)\to
(\frac32,\frac34,\frac12)\to(\frac32,1,\frac12), \\
\textstyle
(\frac12,\frac12,1)\to(\frac12,\frac12,\frac34)\to
(\frac32,\frac12,\frac34)\to(\frac32,\frac12,1). 
\end{gather*}
Notice that the figure is symmetric under rotations
 by $\frac{\pi}{2}$ around the central line,
and also under reflection on the plane $y=\frac12$.
The narrow tubes are assumed to be likewise symmetric.

\begin{figure}[ht]
\begin{center}
\includegraphics[scale=0.17]{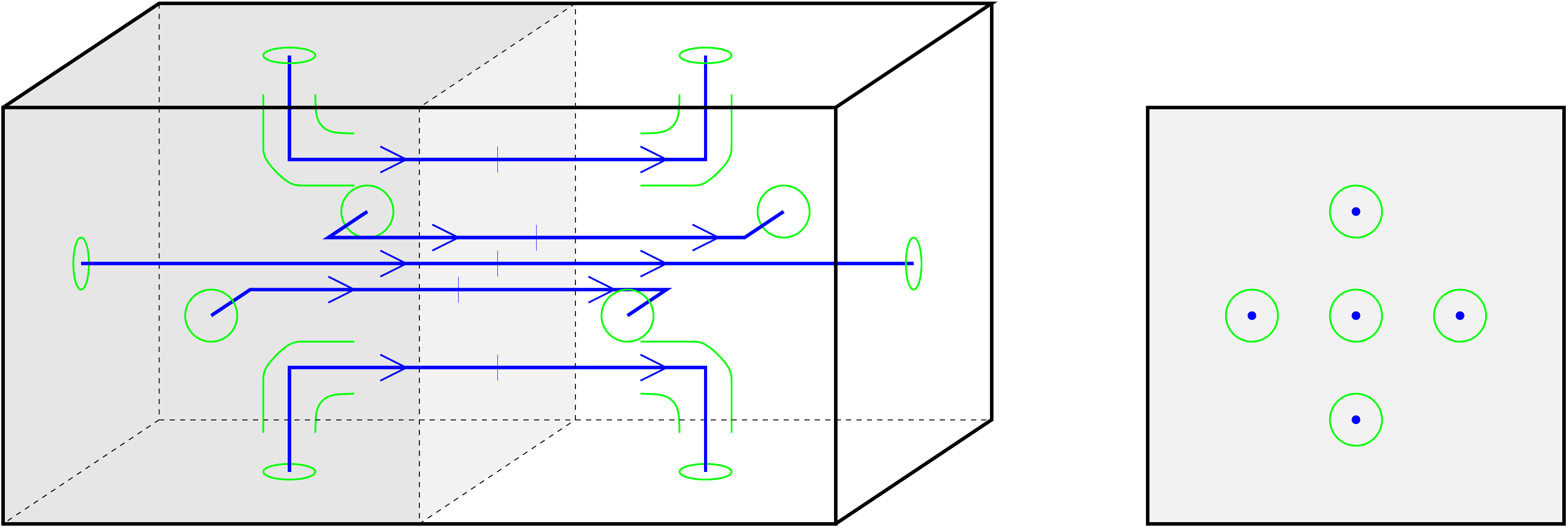}
\end{center}
\caption{A domino, five polygonal lines inside it
and the respective smooth tubes approximating the lines.
Tubes are only very partially drawn in order to keep the figure simple.
Lines are always oriented from black to white.
At the right, the central square separating the two unit cubes of the domino,
crossed by five lines and five tubes.}
\label{fig:5pipe}
\end{figure}

For other dominoes, the figure is appropriately rotated and translated,
while the arrows always point from a black cube to a white one.
Thus, pipes enter the domino through a neighborhood of the center
of a face of the black cube and
exit the domino near the center of a face of the white cube.

If we have a tiling $\bt$ of a region $\cR$,
the same construction is performed in each domino.
This produces a vector field $\xi_{\bt}$ in $\cR$.
Assume that the flux through each pipe is equal to the same $\varphi > 0$.
We may assume that $\xi_{\bt}$
is smooth in the interior of $\cR$ with the support in the union of the pipes.
Notice that the restriction of $\xi_{\bt}$
to a thin neighborhood of the boundary $\partial\cR$
does not depend on the choice of the tiling $\bt$.


\subsection{Proof of theorem on relative flux}
\label{sect:rotation}

Here we prove Theorem \ref{theo:main2}
that the relative flux $\RFlux(\bt)\in H_1(\cR, \partial \cR)$ 
of a tiling $\bt$ coincides modulo a factor with 
the relative rotation class $[\xi_\bt]$ the vector field $\xi_\bt$ 
obtained via the 5-pipe construction:
$[\xi_\bt]=6\varphi \RFlux(\bt)$.

\begin{proof}[Proof of Theorem~\ref{theo:main2}]
Adjust the field $\xi_\bt$  as follows.
Add to it one more pipe in the shape of a long torus,
see Figure~\ref{fig:longT}, 
which will follow the black-to-white direction as on Figure~\ref{fig:5pipe}
and then returns after a U-turn,
without self-linking or linking with any other pipe.  

\begin{figure}[ht]
\begin{center}
\includegraphics[scale=0.17]{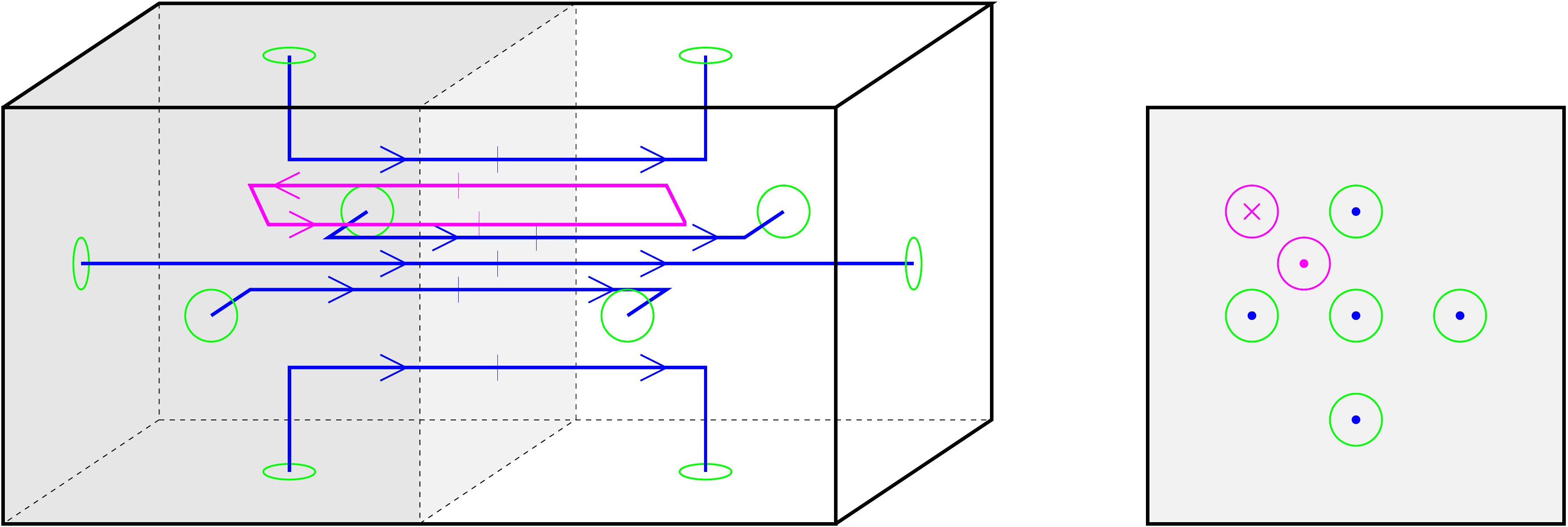}
\end{center}
\caption{An additional narrow toroidal pipe
with a field flux $\varphi$ and directed along the axis of the tile.}
\label{fig:longT}
\end{figure}

With this addition the new field $\bar \xi_\bt$ in the tile:
(1) is still divergence-free without singular points,
(2) has 6 pipes in the black-to-white direction
and 1 tube in the opposite direction,
(3) has one pipe from each face towards the vicinity
of the center of the black cube and one pipe
from (the vicinity of) the center of the white cube toward its faces,
(4) the flux in each narrow pipe is $\varphi$.  

With this construction, mimicking the definition of the chain $6\bt - \bq_1$,
one achieves that  the intersection number 
of the latter (multiplied by $\varphi$)
with any closed  2D surface in $\cR$  transversal to this chain
is equal  to the flux of the corresponding vector field
$\bar \xi_\bt$ through such a surface.
Hence $[\bar \xi_\bt]$ realizes
the same homology class in $H_1(\cR, \partial \cR)$
as $\varphi[6\bt - \bq_1]$.
\end{proof}

\begin{remark}
\label{remark:xiRFlux}
While this adjusted ``6-pipe" field $\bar \xi_\bt$
more adequately represents the $6\bt - \bq_1$,
the contribution of the extra ``long solid torus" pipe vanishes
to both the flux and the helicity, discussed below,
so one can confine to the 5-pipe construction.
However, it explains the origin of the factor $6$ in the formulas.
\end{remark}

\begin{remark}
The chain $\bq_1$, as well as the part of the field $ \bar \xi_\bt$ defined by it, is determined solely by the coloring of the region $\cR$. It defines a net of pipes with sources at each white center and sinks at each black center, ``following" the chain 
$\bq$. Then the tiling $\bt$ can be regarded as an additional system of pipes, 6 times stronger, joining the neighboring centers according to the tiling geometry and satisfying the Kirchhoff  junction rule. 

Note that the same claim about proportionality
of the relative flux $\RFlux(\bt)\in H_1(\cR, \partial \cR)$ 
and the relative rotation class $[\xi_\bt]$ of the corresponding field
holds mutatis mutandis for tilings in any dimension.
\end{remark}


\subsection{An isolating shell}
\label{sect:isolate}

Consider first the case where $\cR \subset \RR^3$
is a well behaved region, perhaps a box.
We apply the construction discussed in Section~\ref{section:relhel}.
Let $M = \RR^3$, $A = \cR$ (possibly rounded at the corners)
and $B = M \smallsetminus A$.
Choose a divergence-free vector field $\zeta$ in $B$
coinciding with $\xi_{\bt}$ near $\partial\cR$.
For simplicity, we may choose $\zeta$ to be also confined to a few tubes.
We call $\zeta$ or, more precisely, $(M,A,B,\zeta)$,
an \textit{isolating shell}.
Given a tiling $\bt$, let $\tilde\xi_{\bt}$
be the corresponding smooth, divergence-free vector field in $M$.
Since $M$ is contractible, the extended vector field $\tilde\xi_{\bt}$
is trivially null-homologous.

The more general  case is similar, with a few adjustments.
If $\partial\cR=\emptyset$, nothing needs to be done.
Otherwise, recall the assumption that $\partial\cR$ is a topological manifold.
We can therefore construct an open manifold $M_0$
with $A = \cR \subset M_0$ by taking $M_0$ to be the union of $A$
with a tubular neighborhood of $\partial\cR$.
A minor difficulty is that $M_0 \smallsetminus A$ may well be disconnected.
If this happens, we construct $M \supseteq M_0$
by adding tubes (disjoint from $\cR$)
connecting the  components of $M_0 \smallsetminus A$.
We thus have an open manifold $M$, $A = \cR \subset M$
and a connected subset $B = M \smallsetminus A$.
As above, the vector fields $\xi_{\bt}$ for all tilings $\bt$
coincide in a neighborhood
of $\partial\cR \subset M$.
Again, choose a  divergence-free vector field $\zeta$ in $B$,
also confined to a few tubes and
coinciding with $\xi_{\bt}$ near $\partial\cR$.
The desired isolating shell is  $(M,A,B,\zeta)$.
We need to check whether the extended vector field
$\tilde\xi_{\bt}=\xi_{\bt}+\zeta$ is null-homologous in $M$.

\begin{lemma}
\label{lemma:RFlux}
Consider a cubiculated region $\cR$ and a domino tiling $\bt$.
For $M \supset \cR$ as above, the vector field $\zeta$ can be chosen
so that $\tilde\xi_{\bt}=\xi_{\bt}+\zeta$ is null-homologous in $M$
if and only if $\RFlux(\bt) = 0$.
\end{lemma}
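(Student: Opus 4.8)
The plan is to reinterpret the null-homologous condition through the core $1$-cycle of the field $\tilde\xi_\bt$ and to relate it to $\RFlux(\bt)$ by way of Theorem~\ref{theo:main2}. Write $\tilde\xi_\bt = \xi_\bt + \zeta$, with $\zeta$ the shell field supported in $B = M \smallsetminus \cR$ and matching $\xi_\bt$ along $S = \partial\cR$. Since $\tilde\xi_\bt$ is tangent to $\partial M$, its rotation class lives in $H_1(M)\simeq H^2(M,\partial M)$ and ``null-homologous'' means this class vanishes. Passing to the $6$-pipe description from the proof of Theorem~\ref{theo:main2}, the cores of $\tilde\xi_\bt$ form a genuine $1$-cycle $Z\in Z_1(M)$: inside $\cR$ the cores are the relative cycle $6\bt-\bq_1$, while in $B$ the shell $\zeta$ supplies a $1$-chain $K$ with $\partial K = -\partial(6\bt-\bq_1)$ that closes $Z$ up. Poincar\'e--Lefschetz duality identifies the rotation class with $\varphi[Z]\in H_1(M)$, so $\tilde\xi_\bt$ is null-homologous if and only if $[Z]=0\in H_1(M;\RR)$.

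For the forward direction I would use the long exact sequence of the pair $(M,B)$ together with excision, which gives $H_1(M,B)\cong H_1(\cR,\partial\cR)$ (the collar $A\cap B$ retracts to $\partial\cR$). Under the natural map $H_1(M)\to H_1(M,B)\cong H_1(\cR,\partial\cR)$ the shell cores $K\subset B$ die, while the $\cR$-part maps to $[6\bt-\bq_1]=6\,\RFlux(\bt)$. Hence the image of $[Z]$ is $6\,\RFlux(\bt)$, and if $\tilde\xi_\bt$ is null-homologous for \emph{any} choice of $\zeta$ whatsoever then $[Z]=0$ forces $\RFlux(\bt)=0$ (over $\RR$, since $6\neq0$). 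This half is automatic and independent of the extension.

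For the converse I would exploit the freedom in $\zeta$. If $\RFlux(\bt)=0$ then $6\bt-\bq_1$ bounds modulo $\partial\cR$, so the image of $[Z]$ in $H_1(M,B)$ vanishes; by exactness of $H_1(B)\xrightarrow{i_*}H_1(M)\to H_1(M,B)$ we get $[Z]\in\operatorname{im}(i_*)$, say $[Z]=i_*[c]$ for some $1$-cycle $c$ in $B$. Now any two admissible shell fields differ by a divergence-free field supported in the interior of $B$, and its core $1$-cycle can be routed along an arbitrary loop in $B$; since $B$ is connected (this is precisely why the shell construction of Section~\ref{sect:isolate} added tubes to connect the components of $M_0\smallsetminus A$), such loops realize every class in $H_1(B)$. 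Thus varying $\zeta$ shifts $[Z]$ by an arbitrary element of $\operatorname{im}(i_*)$, and I can cancel the class $i_*[c]$ to achieve $[Z]=0$, producing the desired $\zeta$.

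The main obstacle is the converse construction: one must check that the combined cycle $Z$ genuinely bounds in $M$, and not merely that its two restrictions to $A$ and $B$ vanish, i.e. that the Mayer--Vietoris indeterminacy coming from $i_*\colon H_1(B)\to H_1(M)$ can actually be absorbed by an admissible choice of $\zeta$. This hinges on two features built into the isolating shell: the connectedness of $B$ (so that return pipes may be routed along any prescribed loop), and the compatibility of the fixed boundary flux distribution on $S$ (so that a divergence-free $\zeta$ with the required boundary behaviour exists at all). Once these are in place, the realization of arbitrary classes in $H_1(B)$ by rerouted pipes makes the cancellation possible, completing the equivalence.
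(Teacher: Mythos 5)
Your proposal is correct, and its forward half is essentially the paper's own argument: both push the class of $\tilde\xi_{\bt}$ through the inclusion-induced map $H_1(M)\to H_1(M,B)\cong H_1(\cR,\partial\cR)$ and invoke Theorem~\ref{theo:main2} to identify the image with $6\varphi\RFlux(\bt)$. The converse is where you genuinely diverge. The paper is constructive: from $\RFlux(\bt)=0$ it takes a weighted oriented surface $S$ in $\cR$ with $\partial S$ equal to $\bt-\bq_1/6$ in the interior of $\cR$, routes the shell pipes along the trace of $\partial S$ on $\partial\cR$, and then $S$ itself is the explicit $2$-chain witnessing that the extended field bounds; the null-homologizing $\zeta$ and the witness are produced in one stroke. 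You instead start from an arbitrary admissible shell field (whose existence is supplied by the construction of Section~\ref{sect:isolate}), use exactness of $H_1(B)\to H_1(M)\to H_1(M,B)$ to see that the obstruction class $[Z]$ lies in the image of $H_1(B;\RR)$, and cancel it by adding closed weighted pipes in the connected set $B$. Your route is a cleaner obstruction-theoretic argument that isolates exactly which freedom in $\zeta$ is exploited (all of $i_*H_1(B;\RR)$ is realizable, because $B$ is connected and coefficients are real), whereas the paper's route needs no correction step and hands you the bounding surface explicitly. Two minor points to tighten: the identification $H_1(M,B)\cong H_1(\cR,\partial\cR)$ should be justified by a collar-plus-excision argument rather than ``$A\cap B$ retracts to $\partial\cR$'' (as literally defined, $A\cap B=\emptyset$), though the paper makes the same identification silently; and your argument presupposes that some admissible $\zeta$ exists at all, which the paper's converse does not need since it builds $\zeta$ outright --- harmless here, since Section~\ref{sect:isolate} provides it before the lemma is stated.
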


\begin{proof}
In this proof, we interpret the vector fields as 
thin tubular neighborhoods of weighted oriented curves.
Thus, we can 
obtain a class in $H_1(M)$ (with real coefficients)  from a vector field.

Assume first that $\zeta$ has been chosen so that
$\tilde\xi_{\bt}$ is null-homologous in $M$.
Inclusion defines a map $i: H_1(M) \to H_1(M,B) = H_1(\cR,\partial\cR)$.
Since $[\tilde\xi_{\bt}] = 0$ we have
$i([\tilde\xi_{\bt}]) = 0 \in H_1(\cR,\partial\cR)$.
But $i([\tilde\xi_{\bt}]) = [\xi_{\bt}] = 6\varphi \RFlux(\bt)$
(from Theorem~\ref{theo:main2}),
proving one implication.

Conversely, assume that $\RFlux(\bt) = 0$.
Construct a weighted oriented surface $S$ in $\cR$
such that $\partial S$ coincides with $\bt - \frac{\bq_1}{6}$
in the interior of $\cR$.
Use the part of $\partial S$ on $\partial\cR$ as guides
to construct the required pipes in the intersection
of $B$ with a thin tubular neighborhood of $\partial\cR$,
i.e., to construct $\zeta$.
The surface $S$ is a witness to the fact that 
$\tilde\xi_{\bt}$ is null-homologous in $M$.
\end{proof}


\section{Key examples}
\label{section:firstexamples}

In this section we present the first examples
of isolating shells and of the five pipes construction.
The results of these examples will be used in later proofs.

\begin{example}
\label{example:221}
Let $\cR = [0,2]\times[0,2]\times[0,1]$,
the $2\times2\times1$ box:
$\cR$ admits exactly two tilings, $\bt_0$ and $\bt_1$,
shown in Figure~\ref{fig:221}.

\begin{figure}[ht]
\begin{center}
\includegraphics[scale=0.5]{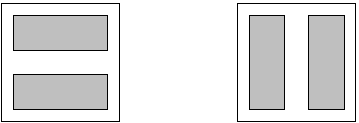}
\end{center}
\caption{The only two tilings of the $2\times2\times1$ box
are joined by a flip.}
\label{fig:221}
\end{figure}

\begin{figure}[ht]
\begin{center}
\includegraphics[scale=0.3]{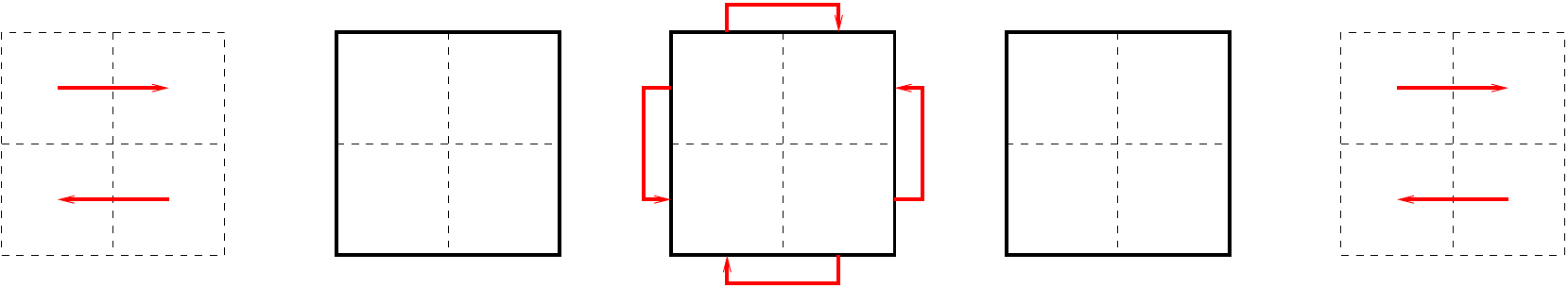}
\end{center}
\caption{A valid isolating shell for the $2\times2\times1$ box.
We show the planes $z = -\frac14, \frac14, \frac12, \frac34, \frac54$.}
\label{fig:flip5c1}
\end{figure}

Figure~\ref{fig:flip5c1} shows a valid isolating shell for $\cR$.
Figure~\ref{fig:flip5c2} shows the two vector fields
$\tilde\xi_{\bt_0}$ and $\tilde\xi_{\bt_1}$:
both have zero helicity.
Indeed, a curve $C_0$ is \textit{trivial}
(for a family of curves $(C_i)$ including $C_0$)
if there exists a contractible open neighborhood $A \supset C_0$
intersecting no other curve $C_i$
and ${\slk}(C_0) = 0$.
In this case, $C_0$ can be discarded from the family
without changing the helicity.
In the present example, every curve is trivial.
\end{example}

\begin{figure}[ht!]
\begin{center}
\includegraphics[scale=0.3]{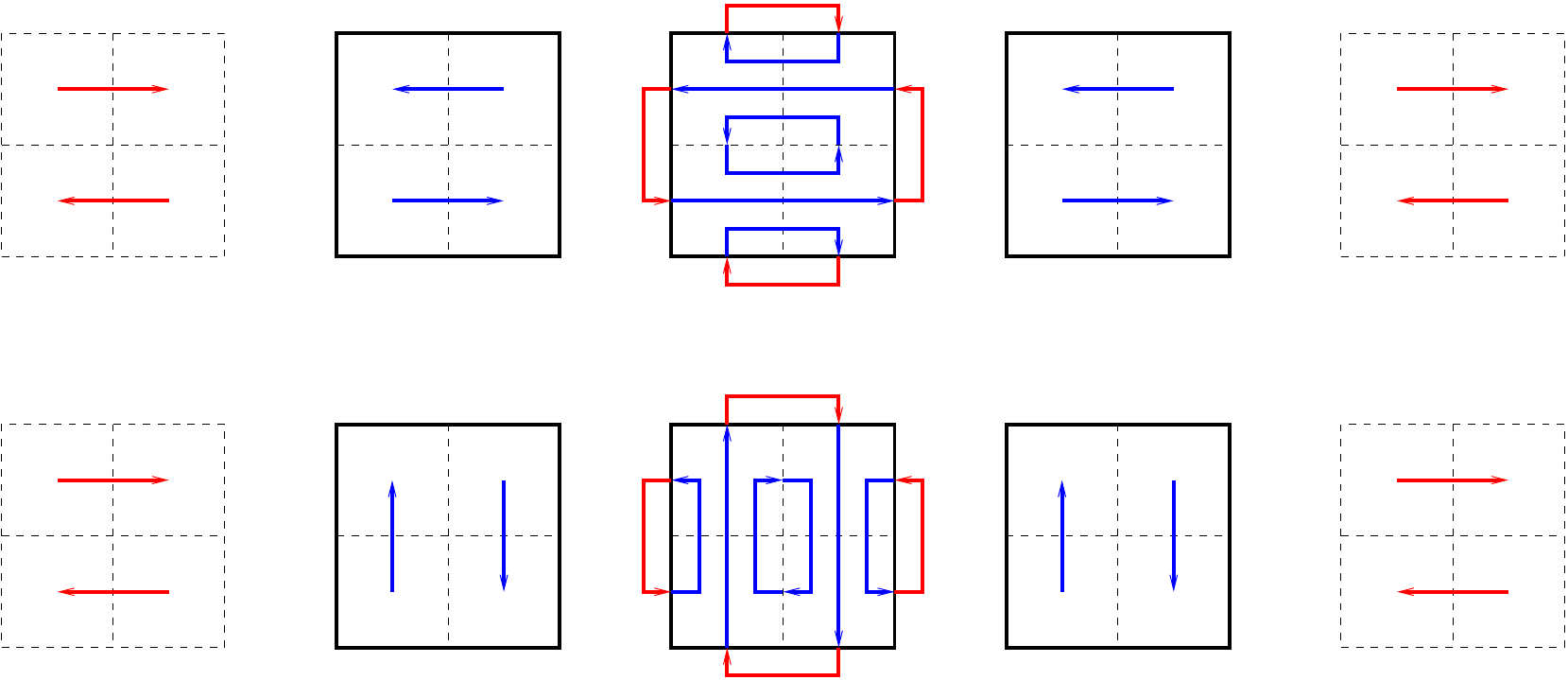}
\end{center}
\caption{The vector fields $\tilde\xi_{\bt_i}$
for the only two tilings of the $2\times2\times1$ box
(with the same planes as Figure~\ref{fig:flip5c1}).}
\label{fig:flip5c2}
\end{figure}

\bigbreak

\begin{example}
\label{example:hex}
Let $\cR$ be the region shown in Figure~\ref{fig:hex0}, i.e.,
\[ \cR = [0,2]^3 \smallsetminus
( ( (1,2] \times (1,2] \times [0,1) ) \cup 
( [0,1) \times [0,1) \times (1,2] ) ). \]
It is a $2\times 2\times 2$ cube without two small, $1\times 1\times 1$ cubes on a diagonal.
The region $\cR$ admits precisely two tilings $\bt_0$ and $\bt_1$,
also shown in Figure~\ref{fig:hex0}.
Notice that the two tilings differ by a positive trit
from $\bt_0$ to $\bt_1$.
They also differ by reflection on the plane $x=y$.
In this case there is no natural definition of $\Tw(\bt)$
as an integer, that is,
we need to choose a base tiling in a more or less arbitrary manner.
If we choose $\bt_0$ as a base tiling we then have
$\Tw(\bt_0) = 0$ and $\Tw(\bt_1) = 1$.

\begin{figure}[ht]
\begin{center}
\includegraphics[scale=0.5]{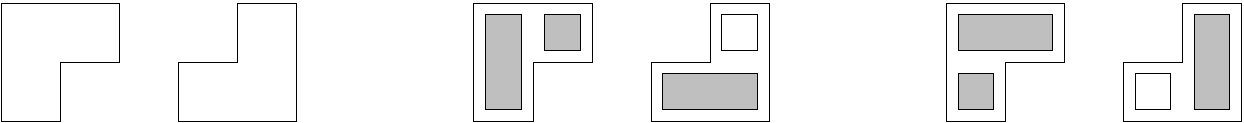}
\end{center}
\caption{The region $\cR$ and its two tilings $\bt_0$ and $\bt_1$.}
\label{fig:hex0}
\end{figure}

\begin{figure}[ht]
\begin{center}
\includegraphics[scale=0.24]{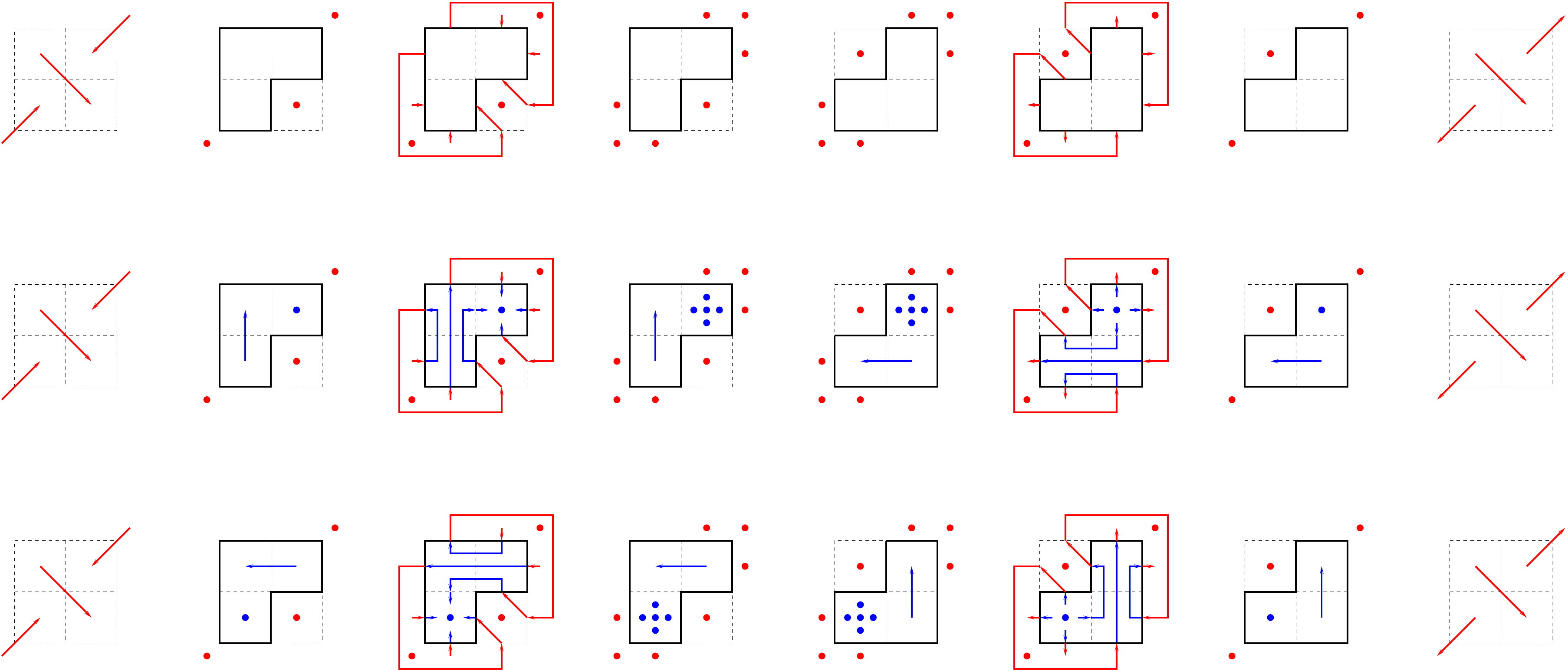}
\end{center}
\caption{A valid isolating shell
and the vector fields $\tilde\xi_{\bt_0}$ and $\tilde\xi_{\bt_1}$.
We show the following planes:
$-\frac14$, $\frac14$, $\frac12$, $\frac34$,
$\frac54$, $\frac32$, $\frac74$, $\frac94$.}
\label{fig:hex1}
\end{figure}

Figure~\ref{fig:hex1} shows a valid isolating shell for $\cR$
and the two vector fields $\tilde\xi_{\bt_0}$ and $\tilde\xi_{\bt_1}$.
Notice that $\tilde\xi_{\bt_1}$ is the mirror image of $\tilde\xi_{\bt_0}$
under reflection on the plane $x=y$.
This implies $\Hel(\tilde\xi_{\bt_1}) = - \Hel(\tilde\xi_{\bt_0})$.

\begin{figure}[ht]
\begin{center}
\includegraphics[scale=0.18]{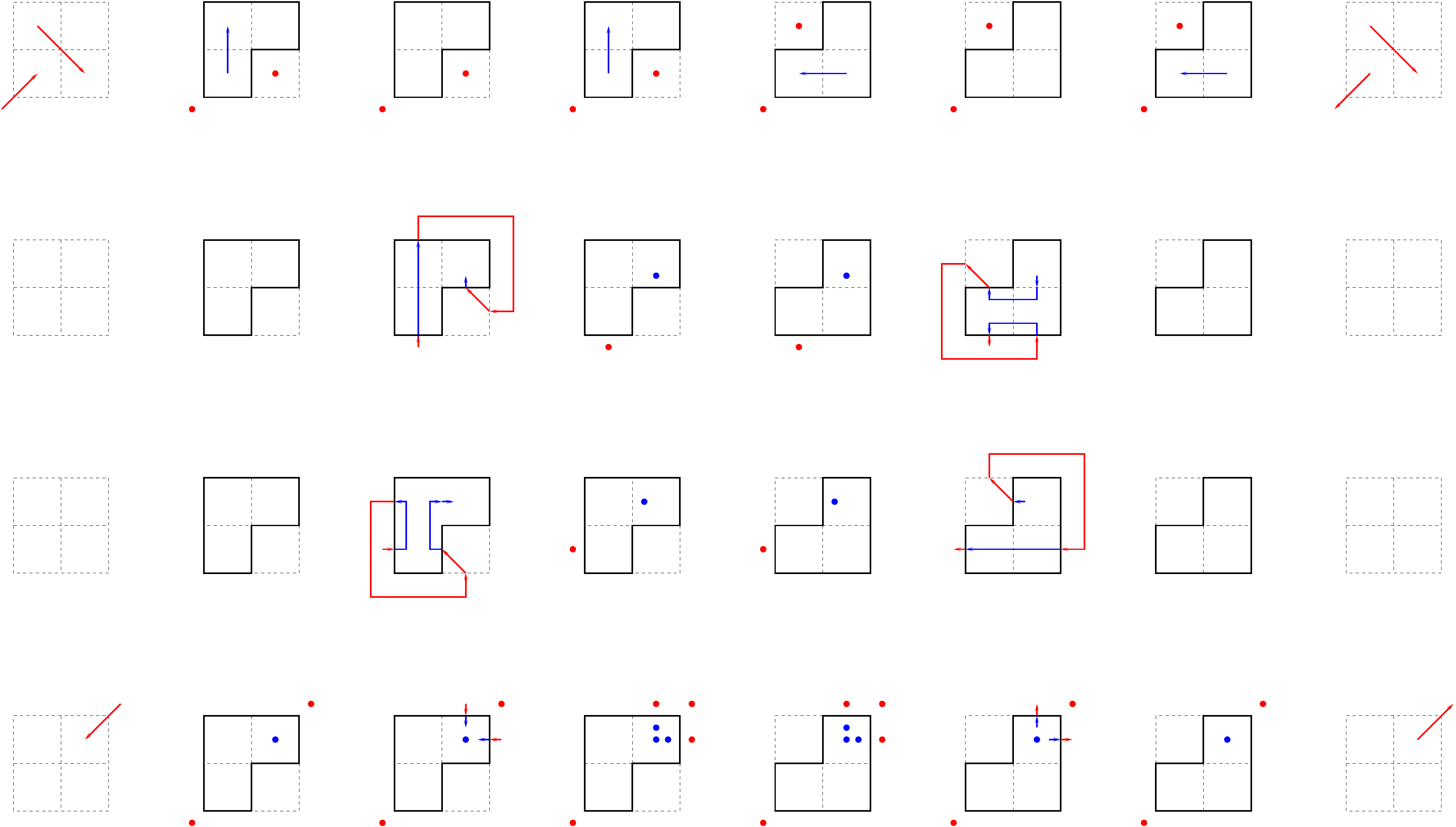}
\end{center}
\caption{Curves for the vector fields $\tilde\xi_{\bt_0}$
from Example~\ref{example:hex}.
The first three rows show three nontrivial curves;
the last row shows several trivial curves.}
\label{fig:hex3}
\end{figure}

\begin{figure}[ht]
\begin{center}
\includegraphics[scale=0.2]{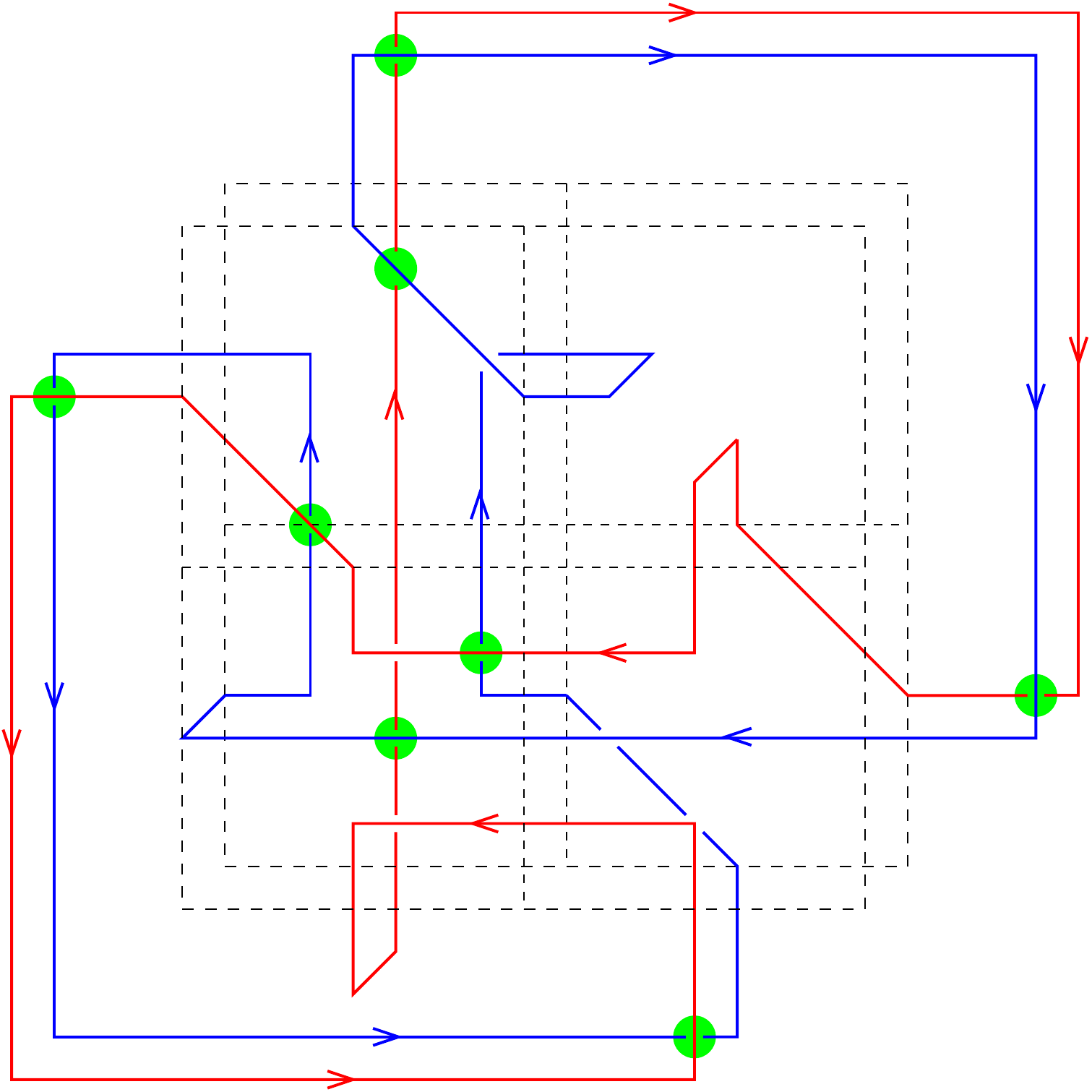}
\end{center}
\caption{Example of computation of the linking number,
between the curves in the second and third rows of Figure~\ref{fig:hex3};
see Example~\ref{example:hex}.}
\label{fig:hex4}
\end{figure}

\begin{figure}[ht]
\begin{center}
\includegraphics[scale=0.2]{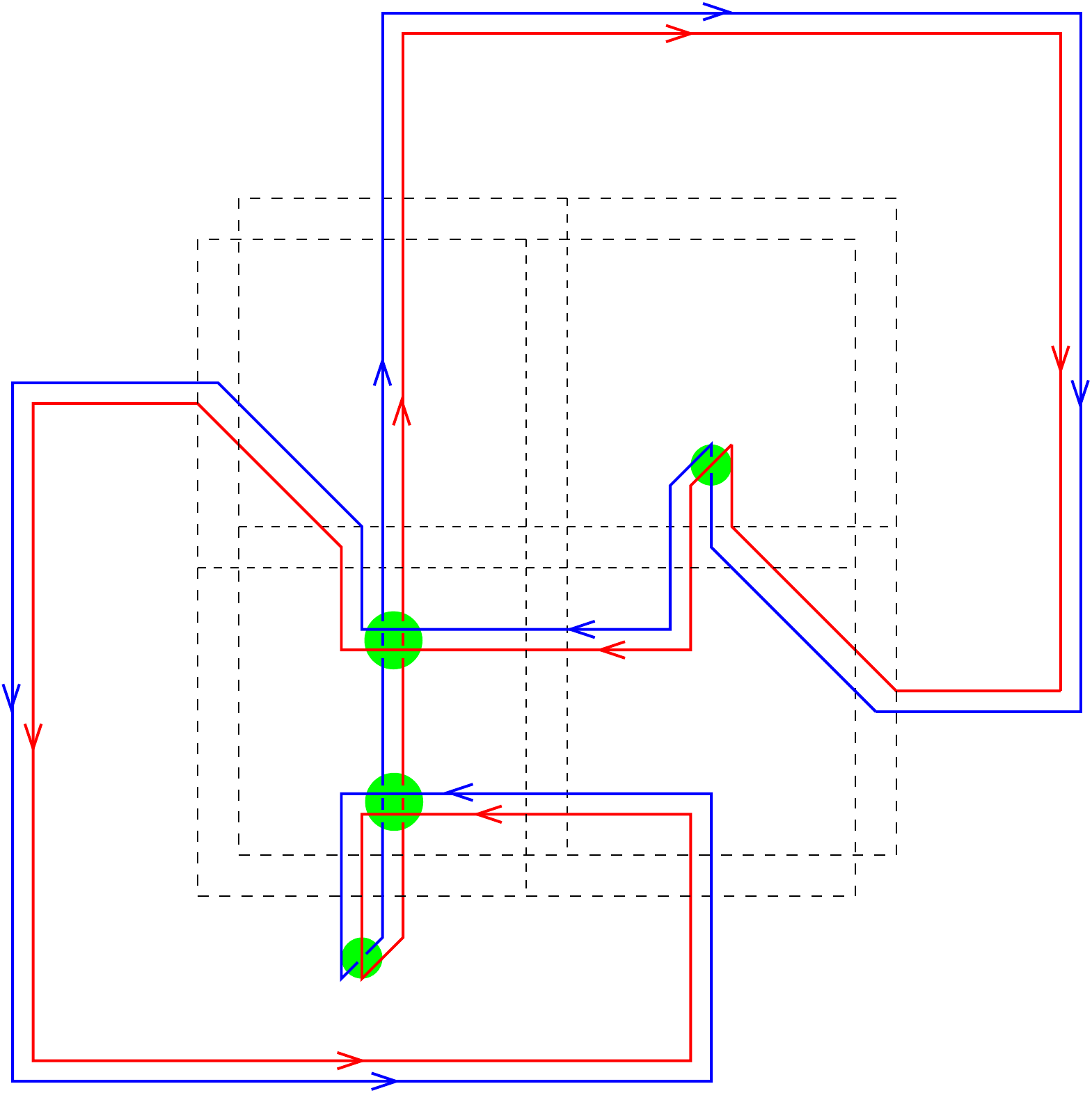}
\end{center}
\caption{Example of computation of the self-linking number
for the curve in the second row of Figure~\ref{fig:hex3};
see Example~\ref{example:hex}.}
\label{fig:hex5}
\end{figure}

Figure~\ref{fig:hex3} shows the several curves
describing $\tilde\xi_{\bt_0}$.
There are three nontrivial curves $C_i$, $1 \le i \le 3$
(one per row in Figure~\ref{fig:hex3})
and several trivial curves.
An explicit computation shows that 
${\lk}(C_i,C_j)=-2$ for all $i\not=j$ and ${\slk}(C_i)=-2$ for all $i$.
Figure~\ref{fig:hex4} shows a sample computation that ${\lk}(C_2,C_3)= -2$.
Indeed, we draw projections of these two curves onto (a small perturbation of) the horizontal $xy$ plane
(red for $C_2$, blue for $C_3$).
This perturbed plane is chosen so that the curves' projections 
on it intersect transversally.
There are eight intersection points
between the projections of different curves (indicated in green).
Computing the signs, we verify that six of them are negative
and two are positive, yielding ${\lk}(C_2,C_3)= -2$.
Similarly, Figure~\ref{fig:hex5} shows that ${\slk}(C_2) = -2$.
Here we project $C_2$ (red) and a satellite curve $C_2^{\sat}$ (blue).
There are six intersection points between different curves
(indicated in green,
with larger disks indication pairs of twin intersection points).
There are five negative and one positive intersection
and therefore ${\slk}(C_2) = {\lk}(C_2,C_2^{\sat}) = -2$.
Other cases are treated similarly.

It is convenient to keep this data in the $3\times 3$ tabulation matrix $L$ whose  entries are
$l_{ij} = {\lk}(C_i,C_j)$ for $i \ne j$ and
$l_{ij} = {\slk}(C_i)$ for $i = j$:
\[ L = \begin{pmatrix}
-2 & -2 & -2 \\ -2 & -2 & -2 \\ -2 & -2 & -2
\end{pmatrix}. \]
Proposition~\ref{prop:Hel} implies
$\Hel(\tilde\xi_{\bt_0}) = -18\varphi^2$ and
$\Hel(\tilde\xi_{\bt_1}) = 18\varphi^2$.
We then have (for all $\bt \in \cT(\cR)$)
\begin{equation}
\label{equation:Helhex}
\Hel(\tilde\xi_{\bt}) = 36\varphi^2 \Tw(\bt) + C,
\qquad C = -18\varphi^2. 
\end{equation}
The value of the constant $C$ depends on two arbitrary choices made above:
the choice of base tiling and the choice of isolating shell.
\end{example}

\begin{remark}
\label{remark:strict}
Strictly speaking, the region $\cR$ in Example~\ref{example:hex}
must be massaged to make its boundary $\partial \cR$ a manifold, since the point $(1,1,1)$ is singular.
This can be easily achieved by considering the boundary $\partial \cR_\epsilon$ of 
a small $\epsilon$-neighborhood $\cR_\epsilon$ of $\cR$, as this will not affect the computations of 
$\Hel$ and $\Tw$.
Another solution is to include in $\cR$ a domino covering
either of the missing cubes, such as
$[1,3]\times[1,2]\times[0,1]$.
The isolating shell needs to be slightly modified
but we arrive at the same conclusions.
\end{remark}


\section{Proof of  the main theorem on twist and helicity}
\label{section:proofmain}

Our aim is to prove Theorem~\ref{theo:main},
i.e., to relate $\Hel(\tilde\xi_{\bt})$ with the twist $\Tw(\bt)$   for a tiling of zero relative flux and satisfying
 the properties of Theorem~\ref{theo:Fluxtwist}.
We prove two lemmas in this direction.

\goodbreak

\begin{lemma}
\label{lemma:fliplemma}
Let $\cR$ be a cubiculated region with a fixed isolating shell.
Let $\bt_0, \bt_1$ be domino tilings of $\cR$
of zero relative flux and
let $\xi_{\bt_i}$ be the corresponding divergence-free vector fields.
\begin{enumerate}
\item{If $\bt_0$ and $\bt_1$ differ by a flip 
then $\Hel(\tilde\xi_{\bt_1}) = \Hel(\tilde\xi_{\bt_0})$.}
\item{If there is a positive trit from $\bt_0$ to $\bt_1$ 
then $\Hel(\tilde\xi_{\bt_1}) = \Hel(\tilde\xi_{\bt_0}) + 36\varphi^2$.}
\end{enumerate}
\end{lemma}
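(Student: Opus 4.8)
The plan is to exploit the locality of both moves. A flip is confined to a $2\times 2\times 1$ box and a trit to a $2\times 2\times 2$ box with two opposite cubes removed; in either case call this box $U\subset\cR$. Because the $5$-pipe construction is carried out domino by domino, and because near each face the pipe crossing is dictated by the coloring rather than by the tiling, I may take $\xi_{\bt_0}$ and $\xi_{\bt_1}$ to coincide on $\cR\smallsetminus U$ and in a neighbourhood of $\partial U$, so that they differ only in the interior of $U$. The whole point will be to show that the helicity difference $\Hel(\tilde\xi_{\bt_1})-\Hel(\tilde\xi_{\bt_0})$ depends only on the fields inside $U$, and then to read off its value from the two key examples, in which $U$ is exactly the entire region.

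To make the locality precise I would argue as follows. Since $\RFlux(\bt_i)=0$, Lemma~\ref{lemma:RFlux} ensures that, with the fixed shell, both extended fields $\tilde\xi_{\bt_i}$ are null-homologous, so $\omega_i:=i_{\xi_{\bt_i}}\mu$ has a global potential $\alpha_i$ with $d\alpha_i=\omega_i$ and $\Hel(\tilde\xi_{\bt_i})=\int_M\alpha_i\wedge d\alpha_i$ in the sense of Definition~\ref{definition:Arnold}. Setting $\delta=\alpha_1-\alpha_0$ and integrating by parts, I would obtain
\[
\Hel(\tilde\xi_{\bt_1})-\Hel(\tilde\xi_{\bt_0})
=2\int_M\omega_0\wedge\delta+\int_M\delta\wedge d\delta .
\]
Here $d\delta=\omega_1-\omega_0$ is supported in the interior of $U$ and vanishes near $\partial U$. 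Since $U$ is contractible, the compactly supported cohomology $H^2_c$ of its interior vanishes, so this closed compactly supported form admits a primitive supported in the interior of $U$; as helicity is independent of the choice of potential for a field tangent to $\partial M$, I may replace $\alpha_1$ by $\alpha_0$ plus that primitive and thereby assume that $\delta$ itself is supported in the interior of $U$. Both integrals then collapse to integrals over $U$ involving only $\omega_0|_U$ and $\omega_1|_U$. This is a local avatar of Theorem~\ref{theo:AB}: the difference is insensitive not just to the extension $\zeta$ but to the entire ambient configuration outside $U$.

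It then remains to evaluate the local quantity, and this is where the key examples enter. By construction $\omega_0|_U$ and $\omega_1|_U$ are identical to the fields of the matching example, and running the very same integration by parts there expresses that example's helicity difference as the \emph{same} local integral. Example~\ref{example:221} shows that for a flip both fields have zero helicity, so the local integral vanishes and $\Hel(\tilde\xi_{\bt_1})=\Hel(\tilde\xi_{\bt_0})$. Example~\ref{example:hex} shows that for a positive trit the two helicities are $-18\varphi^2$ and $+18\varphi^2$, so the local integral equals $36\varphi^2$ and $\Hel(\tilde\xi_{\bt_1})=\Hel(\tilde\xi_{\bt_0})+36\varphi^2$, once I have checked that the orientation of the given trit agrees with that of the example.

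The hard part will be the locality step, namely justifying that the difference-potential $\delta$ can be taken supported in the interior of the move box. This rests on the contractibility of $U$ (immediate for the flip box, but requiring a short verification for the punctured cube of the trit) together with the potential-independence of helicity for fields tangent to the boundary. The only remaining bookkeeping is to match the sign convention of Theorem~\ref{theo:Fluxtwist} for a positive trit against the concrete computation in Example~\ref{example:hex}, so as to fix the value $+36\varphi^2$ rather than $-36\varphi^2$.
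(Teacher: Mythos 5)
Your proposal is correct, and at the strategic level it coincides with the paper's proof: localize the helicity difference to the move box and evaluate it against Example~\ref{example:221} (flip) and Example~\ref{example:hex} (trit). The difference lies in how the localization is justified. The paper gets it in one stroke by applying Theorem~\ref{theo:AB} with $A_{\new}$ equal to the move box and $B_{\new}=M\smallsetminus A_{\new}$ (keeping the same $M$): since $\xi_{\bt_0}$ and $\xi_{\bt_1}$ coincide on $B_{\new}$, the helicity difference is independent of the common exterior field, which may therefore be replaced by the simple shell of the relevant example, whose tabulated linking data give $0$ and $36\varphi^2$. You instead rebuild the localization with explicit potentials: using $H^2_c(\operatorname{int}U)\cong H_1(\operatorname{int}U)=0$ you take the difference-potential $\delta$ compactly supported in the move box, so that $\Hel(\tilde\xi_{\bt_1})-\Hel(\tilde\xi_{\bt_0})=2\int_M\omega_0\wedge\delta+\int_M\delta\wedge d\delta$ depends only on $\omega_0|_U$ and $\omega_1|_U$, and then transport the same $\delta$ into the example. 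This is in effect an inline re-proof of the special case of Theorem~\ref{theo:AB} that you need, so citing that theorem (proved in the paper with no topological restrictions on $A$, $B$, $M$) would shorten your argument considerably; on the other hand, your route only ever uses the topology of $U$ itself, never of the complement, and it makes transparent that contractibility of the move box is what drives the locality. Three points need care in your write-up. First, ``helicity is independent of the choice of potential'' is not automatic when $H^1(M)\neq 0$: the ambiguity is the pairing of a class in $H^1(M)$ with $[\omega_1]\in H^2_c(M)$, and it vanishes precisely because the fixed shell makes the extended fields null-homologous in the sense of Lemma~\ref{lemma:RFlux} (the shell is built for $\bt_0$; $\tilde\xi_{\bt_1}$ is then also null-homologous since $\bt_1-\bt_0$ bounds); this is the same hypothesis under which the paper may invoke Theorem~\ref{theo:AB}, so nothing is lost, but it must be said. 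Second, perform the replacement of $\alpha_1$ by $\alpha_0+\delta$ \emph{before} integrating by parts, since the discarded boundary term (at infinity, or on $\partial M$) is only visibly absent once $\delta$ is compactly supported. Third, for the trit the six-cube region $U$ fails to be a manifold with boundary at its central point (cf.\ Remark~\ref{remark:strict}); this is harmless because your cohomological argument only needs $\operatorname{int}U$ to be an oriented open subset of $\RR^3$ with vanishing $H_1$, but the verification you defer should be phrased for the interior, not for $U$. With these remarks, and the sign bookkeeping you already flag (matching the given positive trit to the orientation of Example~\ref{example:hex}), your proof is complete and yields the same constants as the paper's.
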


\goodbreak

\begin{proof}
To see the invariance under the flip (item 1) we start with $A$ being  the union
of the four unit cubes involved in the flip (as in Example~\ref{example:221}) and $B = M \smallsetminus A$ its any isolating shell. 
The fields $\xi_{\bt_i}$ are defined and different in $A$, but continued into $B$ as the same field $\zeta$. 
One can see that the relative helicities of $\tilde\xi_{\bt_i}=\xi_{\bt_i}+\zeta$ coincide, $\Hel(\tilde\xi_{\bt_1}) = \Hel(\tilde\xi_{\bt_0})$. (For instance, for the choice of continuation $\zeta$  in Figure~\ref{fig:flip5c1} both helicities vanish.) 
For a general region $\cR$ and a fixed isolating shell $B$ with $M:=\cR \cup B$
we still define $A$ as the  four  cubes ``participating in the flip" from $\xi_{\bt_0}$ to $\xi_{\bt_1}$, while regard its complement $B':=M\smallsetminus A$ as a new isolating shell. The same example shows  the invariance of relative helicity,  $\Hel(\tilde\xi_{\bt_1}) = \Hel(\tilde\xi_{\bt_0})$.

The change of relative helicity  under the trit (item 2) we define $A$
to be the union of the six unit cubes involved in the trit.
The rest of the proof is similar, except that we now use Example~\ref{example:hex}.
\end{proof}



Our next lemma considers refinements.
We must first define how to refine an isolating shell.
Consider a cubiculated region $\cR$, a tiling $\bt$
and a valid isolating shell $(M,A,B,\zeta)$.
The boundary $\partial\cR$ is quadriculated:
in the middle of each square, $\zeta$ draws a pipe
with flux $\phi$, pointing in or out according to color.
That pipe is connected to somewhere in $\partial\cR$
to another square of the opposite color.

When we refine $\cR$ to obtain $\cR'$,
each old large square of $\partial\cR$ is decomposed
into $25$ new small squares of $\partial\cR'$.
Assume without loss of generality that the old pipe in $\zeta$
is now in the central small square.
Construct $12$ new short pipes
matching the other $24$ small squares
as in Figure~\ref{fig:partialrefine}.
(Note that different choices of the horizontal direction on a square face of $\partial\cR$
define equivalent isolating shells.)
This defines $\zeta'$ and the desired isolating shell for $\cR'$.

\begin{figure}[ht]
\begin{center}
\includegraphics[scale=0.2]{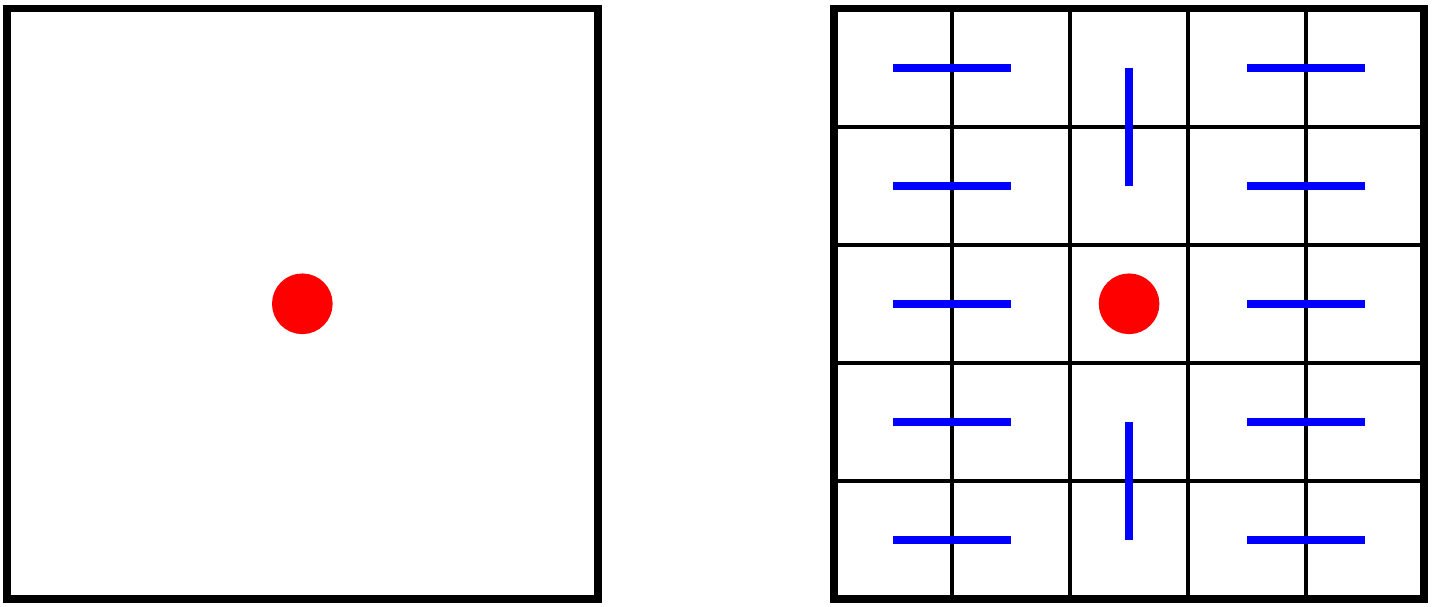}
\end{center}
\caption{A square in $\partial\cR$ is decomposed
into $25$ small squares in $\partial\cR'$.
We refine an isolating shell by adding short pipes
joining the centers of the unmatched new squares.}
\label{fig:partialrefine}
\end{figure}

\goodbreak

\begin{lemma}
\label{lemma:refinelemma}
Let $\cR$ be a cubiculated region with a fixed isolating shell.
Refine $\cR$ to obtain $\cR'$ and a corresponding isolating shell.
Let $\bt$ be a domino tiling of $\cR$ and $\bt'$ be its refinement,
a tiling of $\cR'$.
Let $\xi_{\bt}$ and $\xi_{\bt'}$
be the corresponding divergence-free vector fields.
We have $\Hel(\tilde\xi_{\bt}) = \Hel(\tilde\xi_{\bt'})$.
\end{lemma}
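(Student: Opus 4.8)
The plan is to reduce the statement to Proposition~\ref{prop:Hel} and compare the two weighted curve systems directly, the decisive point being that the checkerboard coloring of the refined cubes forces the refined pipes to carry the \emph{same net flux} as the coarse ones. First I would record the structure of $\xi_{\bt'}$ inside a single domino $d$ of $\bt$. Since $\bt'$ splits $d$ into sub-dominoes all parallel to $d$, while the sub-cubes carry the fine checkerboard coloring, the central pipes of the sub-dominoes stacked along the axis of $d$ join into straight through-channels; crucially, their black-to-white orientation alternates with the parity of the transverse position. On the $5\times5$ transverse grid (here the refinement factor being odd is essential) one obtains $13$ channels running one way and $12$ the other, so the net flux carried through $d$ by $\xi_{\bt'}$ is exactly $\varphi$, matching the single central pipe of $\xi_\bt$, while the $12$ oppositely oriented channel pairs cancel in homology. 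The U-turn pipes and the short pipes produced near the interior faces of $d$ close up into curves confined to contractible neighborhoods meeting no other curve and carrying zero self-linking, i.e.\ they are \emph{trivial} in the sense of Example~\ref{example:221} and may be discarded without changing $\Hel$.

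Next I would exploit the bilinearity of helicity in the field: writing $\Hel(\xi)=\int_M \omega_\xi\wedge d^{-1}\omega_\xi$ with $\omega_\xi=i_\xi\mu$, the mutual-linking part is a symmetric bilinear form that only feels the net signed flux of each bundle of parallel channels. Because the forward and backward channels of a refined bundle run parallel and cancel pairwise (each such pair bounds an annulus and so links every other curve trivially), the surviving net channel of each bundle is isotopic, as a framed weighted curve of flux $\varphi$, to the corresponding coarse core $C_i$. Consequently ${\lk}$ between surviving bundles reproduces ${\lk}(C_i,C_j)$, and the mutual-linking contribution to $\Hel(\tilde\xi_{\bt'})$ equals that of $\Hel(\tilde\xi_{\bt})$. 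The refinement of the isolating shell described before the lemma is designed precisely so that the same cancellation occurs in the tubular neighborhood of $\partial\cR$, so that net flux matches across all of $M$; alternatively one may localize the comparison by Theorem~\ref{theo:AB}, the local model being helicity-neutral exactly as in Example~\ref{example:221}.

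The main obstacle, and the step demanding the most care, is the \emph{self-linking} term of Proposition~\ref{prop:Hel}: I must check that refining a core $C_i$ into a bundle of parallel strands introduces no net writhe or internal twist, so that the surviving channel inherits exactly the framing of $C_i$ and ${\slk}(C_i)$ is unchanged, while the discarded channel pairs and U-turn curves contribute $0$. I expect to settle this by the symmetry and locality already used in Examples~\ref{example:221} and \ref{example:hex}: the $5$-pipe model is built without internal twisting, this property is inherited by each refined pipe, and any surviving framing is a coarse feature detected by the net flux, which has been matched above. Combining the preservation of both the mutual-linking and the self-linking terms with Proposition~\ref{prop:Hel} then yields $\Hel(\tilde\xi_{\bt})=\Hel(\tilde\xi_{\bt'})$.
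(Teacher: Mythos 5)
Your decomposition of $\xi_{\bt'}$ inside one coarse domino (through-channels with a $13$--$12$ orientation split, interior U-turn pipes closing into small rectangles) is accurate as far as it goes, but the decisive step fails: you discard the oppositely oriented channels because they ``cancel in homology'', and helicity is not a homological quantity --- if it were, twist would carry no information beyond flux, which is the point of the whole paper. The narrow statement that is true is that a pair of oppositely oriented \emph{closed} curves, one of which is a framed push-off of the other, contributes zero to \eqref{equation:Hel}: there $2\,{\lk}(C^+,C^-)\varphi^2=-2\,{\slk}(C^+)\varphi^2$ cancels the two self-linking terms, and the linking with any third curve cancels as well. But the forward and backward channels of $\xi_{\bt'}$ are parallel only as \emph{segments} inside one coarse domino; their global closures run through the U-turn pipes of neighbouring, possibly orthogonal, coarse dominoes and through the refined shell, and nothing in your argument shows that these closures pair off into parallel closed curves with matching framings. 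The same objection applies to the U-turn pipes on faces \emph{between} coarse dominoes --- your trivial-loop argument covers only faces interior to a single domino --- and your last paragraph explicitly defers the self-linking verification (``I expect to settle this by symmetry and locality''), which is exactly the unproved core of the claim. In general the integral curves of $\xi_{\bt'}$ are long and geometrically complicated, and no isotopy of that system onto (coarse curves of flux $\varphi$) together with (cancelling pairs) and (trivial loops) is exhibited.

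The paper avoids all of this with a tool you already have, namely item 1 of Lemma~\ref{lemma:fliplemma}: one first performs flips on $\bt'$ to reach a specially designed tiling $\bt^\star$ of $\cR'$ (Figure~\ref{fig:protorefine}); since flips preserve helicity, $\Hel(\tilde\xi_{\bt'})=\Hel(\tilde\xi_{\bt^\star})$. By construction the pipe system of $\bt^\star$ contains the old pipes of $\xi_{\bt}$ essentially unchanged, with the same linking and self-linking data, while \emph{all} remaining curves are trivial in the sense of Example~\ref{example:221} (Figure~\ref{fig:refine}); Proposition~\ref{prop:Hel} then gives $\Hel(\tilde\xi_{\bt^\star})=\Hel(\tilde\xi_{\bt})$ with no cancellation argument at all. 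To repair your proof you would have to either establish the global pairing/isotopy statement above (which seems hard), or insert the flip reduction from $\bt'$ to $\bt^\star$ first --- which is the paper's proof.
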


\goodbreak

\begin{proof}
We first perform a few flips on $\bt'$ to obtain a more drawable tiling
$\bt^\star$ of $\cR'$:
by Lemma~\ref{lemma:fliplemma}, $\Hel(\xi_{\bt^{\star}}) = \Hel(\xi_{\bt'})$.
The tiling $\bt^\star$ is drawn in Figure~\ref{fig:protorefine}.
Notice that each domino of $\bt$ is tiled by $125 = 5^3$
small dominoes of $\bt^\star$.

\begin{figure}[ht]
\begin{center}
\includegraphics[scale=0.25]{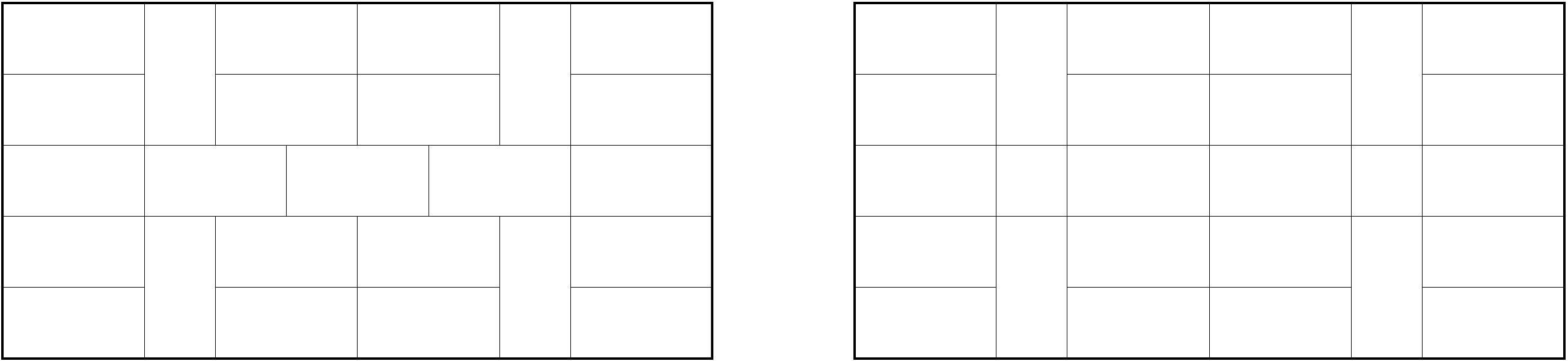}
\end{center}
\caption{The tiling $\bt^\star$,
a minor modification of the refinement $\bt'$.
We show here two floors of a domino of $\bt$.
The large domino is drawn horizontally.
At the left, we draw the central floor.
At the right, any of the other $4$ floors.}
\label{fig:protorefine}
\end{figure}

We now construct
the vector field $\xi_{\bt^\star}$, that is,
the pipe system corresponding to the tiling $\bt^\star$ as  in Figure~\ref{fig:refine}.

\begin{figure}[ht]
\begin{center}
\includegraphics[scale=0.3]{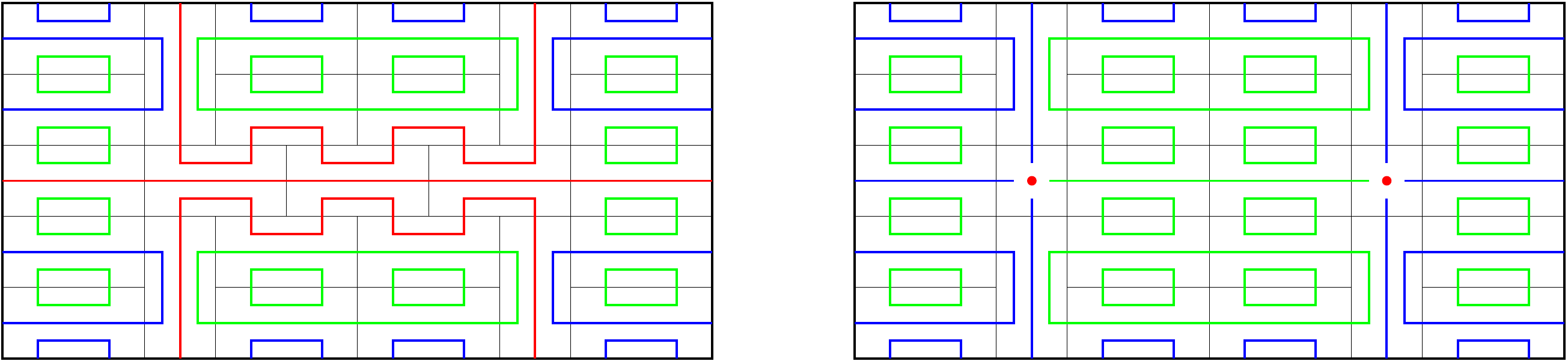}
\end{center}
\caption{The vector field $\xi_{\bt^\star}$.
The position is the same as in Figure~\ref{fig:protorefine}.
The pipes which were already present in $\xi_{\bt}$
are drawn in red and remain essentially unchanged.
The remaining pipes correspond to new curves, all trivial.}
\label{fig:refine}
\end{figure}

The old pipes (that is, the pipes which were already in $\xi_{\bt}$)
remain essentially as they were (in $\xi_{\bt}$),
with no change of linking or self-linking numbers.
The new pipes define only trivial links.
This implies $\Hel(\tilde\xi_{\bt}) = \Hel(\tilde\xi_{\bt^\star})$,
completing the proof.
\end{proof}

We are ready to state and prove our main result,
Theorem~\ref{theo:main}.
We restate it in a slightly different way.

\begin{theo} {\bf (=Theorem \ref{theo:main}$'$)}
Let $\cR$ be a cubiculated region. 
Let $\bt_0$ be a tiling of $\cR$ such that $\RFlux(\bt_0) = 0 \in H_1(\cR,\partial\cR)$.
Construct an isolating shell for $\bt_0$.
There exists a contant $C \in \RR$ such that
\[ \Tw(\bt) = \frac{1}{36\varphi^2} \Hel(\tilde\xi_{\bt}) + C \]
for all $\bt \in \cT_{0}(\cR)$.
\end{theo}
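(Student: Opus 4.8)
The plan is to combine the two lemmas just proved with the combinatorial characterization of twist in Theorem~\ref{theo:Fluxtwist}. The essential observation is that both sides of the desired equality, $\Tw(\bt)$ and $\frac{1}{36\varphi^2}\Hel(\tilde\xi_{\bt})$, are functions on the space of refinements $\cT_0(\cR^{(\ast)})$ that transform identically under the three basic operations. By Theorem~\ref{theo:flipsandtrits}, any two tilings of the same flux become connected by flips and trits after sufficiently many refinements, so it suffices to check that the helicity-based quantity satisfies the same recursion that characterizes twist uniquely (up to an additive constant).

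First I would define $g(\bt) := \frac{1}{36\varphi^2}\Hel(\tilde\xi_{\bt})$ and verify that $g$ obeys the three properties in Theorem~\ref{theo:Fluxtwist}. Property (1), flip-invariance, is exactly the first item of Lemma~\ref{lemma:fliplemma}. Property (2), a change of $\pm 1$ under a positive/negative trit, is the second item of Lemma~\ref{lemma:fliplemma}, since dividing the stated helicity jump $36\varphi^2$ by $36\varphi^2$ gives $+1$ (and the negative trit case follows by reversing the move). Property (3), refinement-invariance, is precisely Lemma~\ref{lemma:refinelemma}, noting that we must refine the isolating shell as prescribed before Lemma~\ref{lemma:refinelemma} so that $\tilde\xi_{\bt'}$ is the legitimately refined field. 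Since $\RFlux(\bt_0)=0$ and all tilings in $\cT_0(\cR)$ share the flux of $\bt_0$, Lemma~\ref{lemma:RFlux} guarantees a valid isolating shell exists, so $\tilde\xi_{\bt}$ is null-homologous and $\Hel(\tilde\xi_{\bt})$ is well-defined for every competitor.

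With these three properties established, the uniqueness statement of Theorem~\ref{theo:Fluxtwist} does the rest. Both $\Tw$ and $g$ are functions on $\cT_0(\cR^{(\ast)})$ that are flip-invariant, jump by $\pm 1$ under trits with the sign determined by orientation, and are refinement-invariant. By Theorem~\ref{theo:flipsandtrits}, any $\bt \in \cT_0(\cR)$ can be joined to $\bt_0$ by a sequence of flips and trits after passing to a common refinement $\cR^{(k)}$. Tracking the difference $\Tw(\bt) - g(\bt)$ along this sequence, each flip leaves both unchanged and each trit changes both by the same $\pm 1$, so the difference is constant along the path; combined with refinement-invariance of both functions, $\Tw - g$ is a constant $C$ on all of $\cT_0(\cR)$, which is the claim.

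The main obstacle I anticipate is not the logical assembly above but rather confirming that the hypotheses of Theorem~\ref{theo:Fluxtwist} are genuinely in force in our setting. Specifically, one must ensure that the invariant $m$ is $0$ (so that twist is genuinely $\ZZ$-valued up to a constant rather than $\ZZ/m\ZZ$-valued); if $m>0$, property (4) of Theorem~\ref{theo:Fluxtwist} describes a closed loop of tilings along which the integer twist increments by $m$, and we would need $\Hel$ to be multivalued in the same way—which it is not, being a genuine real number. Thus the cleanest argument implicitly requires $m=0$, equivalently that $g$ being single-valued forces consistency; I would address this by observing that $g$ is a well-defined real-valued function, so along any such closed loop its total change is $0$, which forces the total twist change $m$ to vanish as well, confirming $m=0$ and legitimizing the $\ZZ$-valued formulation of the theorem.
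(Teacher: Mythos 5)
Your proposal is correct and takes essentially the same route as the paper: define the helicity-based candidate function, verify the axioms of Theorem~\ref{theo:Fluxtwist} using items 1 and 2 of Lemma~\ref{lemma:fliplemma} and Lemma~\ref{lemma:refinelemma}, and conclude via the connectivity result (Theorem~\ref{theo:flipsandtrits}) together with the uniqueness clause of Theorem~\ref{theo:Fluxtwist}. Your closing argument that single-valuedness of helicity along a closed loop of moves forces $m=0$ is precisely the point the paper treats tersely (``item 4 is trivial'' with $m=0$, elaborated in Remark~\ref{cor:RFlux}), so this is added explicitness rather than a different approach.
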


\begin{proof}
Recall that given an initial  tiling $\bt_0$,  the set of all refinements of tilings  
$\cT_{0}(\cR)$ with the same flux is denoted by $\cT_{0}(\cR^{(\ast)})$.
Define $\widetilde\Tw: \cT_{0}(\cR^{(\ast)}) \to \RR$ by
\[ \widetilde\Tw(\bt) = \frac{1}{36\varphi^2} 
\left(\Hel(\tilde\xi_{\bt}) - \Hel(\tilde\xi_{\bt_0})\right). \]
 We claim that $\widetilde\Tw$ assumes integer values
and is   the  twist function, 
as in Theorem-Definition~\ref{theo:Fluxtwist}. Indeed, properties 1 and 2 
follow from items 1 and 2 of Lemma~\ref{lemma:fliplemma}, while its property 3
follows from Lemma~\ref{lemma:refinelemma}.
The fact that $\widetilde\Tw$ assumes integer values
now follows from the connectivity of graph $\cT_0(\cR^{(\ast)})$
(Theorem~\ref{theo:flipsandtrits}).
 Uniqueness of $\Tw$ up to an additive constant
(which also follows from Theorem~\ref{theo:flipsandtrits})
completes the proof.
\end{proof}

\begin{remark}
\label{cor:RFlux}
The above theorem also gives an alternative proof of Theorem-Definition~\ref{theo:Fluxtwist}.
Indeed, we constructed a twist function $\Tw$ assuming values in $\ZZ$.
In \cite{FKMS} also the setting with $m\not=0$ is discussed from a combinatorial viewpoint,
where $m$ is related to the relative flux of the initial tiling.
One can prove that $m = 0$ if and only if $\RFlux(\bt_0) = 0$,
but we do not discuss this here as the proof uses different (combinatorial) tools.
\end{remark}


\section{A 3D `crosses-and-toes' example}
\label{section:extra}

In this section we consider yet one more, larger example of tiling, which is of interest by itself.
One can use this example
and a slightly different construction of the set $A_{\new}$ for the proof of the second part of Lemma \ref{lemma:fliplemma}.

\begin{example}
\label{example:332}
Let $\cR = [0,3]\times[0,3]\times[0,2]$,
the $3\times3\times2$ box,
the same shown in Figure~\ref{fig:trit}.
A valid isolating shell is shown in Figure~\ref{fig:332-is}.
Figure~\ref{fig:332-tv} shows $\tilde\xi$ (i.e., the tubes)
for the fifth tiling in Figure~\ref{fig:trit},
the one with nine vertical dominoes.
Even though there are many curves, they are all trivial
and helicity is again zero.

\begin{figure}[ht]
\begin{center}
\includegraphics[scale=0.175]{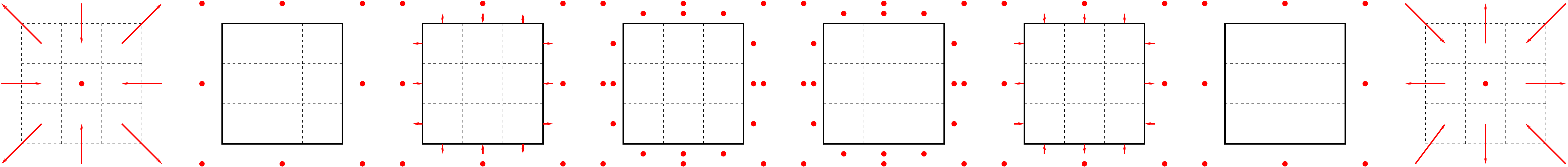}
\end{center}
\caption{A valid isolating shell for the $3\times3\times2$ box.
We show the following planes: $-\frac14$, $\frac14$, $\frac12$,
$\frac34$, $\frac54$, $\frac32$, $\frac74$ and $\frac94$.
The red dots are vertical tubes.
There is yet another central vertical tube, not entirely shown.
}
\label{fig:332-is}
\end{figure}

\begin{figure}[ht!]
\begin{center}
\includegraphics[scale=0.175]{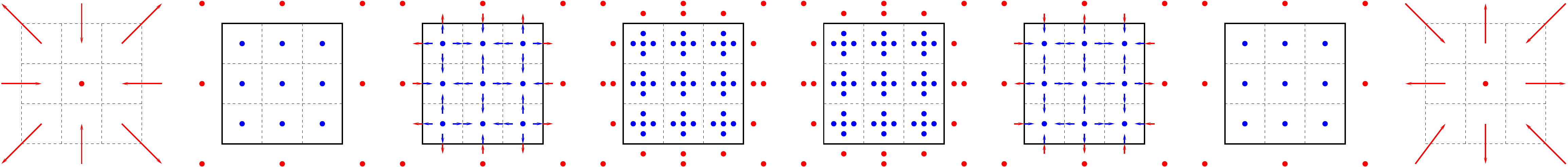}
\end{center}
\caption{The five-tube version of the vertical tiling 
of the $3\times 3\times 2$ box (Example~\ref{example:332}).}
\label{fig:332-tv}
\end{figure}

Let $\bt_0$ be the first tiling in Figure~\ref{fig:trit},
the one with nonzero twist, so that $\Tw(\bt_0) = -1$. 
Figure~\ref{fig:332-t0} shows $\bt_0$ again;
on the second row, the tubes for $\xi_{\bt_0}$.
The other rows show one curve at a time:
call them $C_1$ to $C_6$, from top to bottom.
There is a central vertical curve $C_7$, not shown.

\begin{figure}[p]
\begin{center}
\includegraphics[scale=0.175]{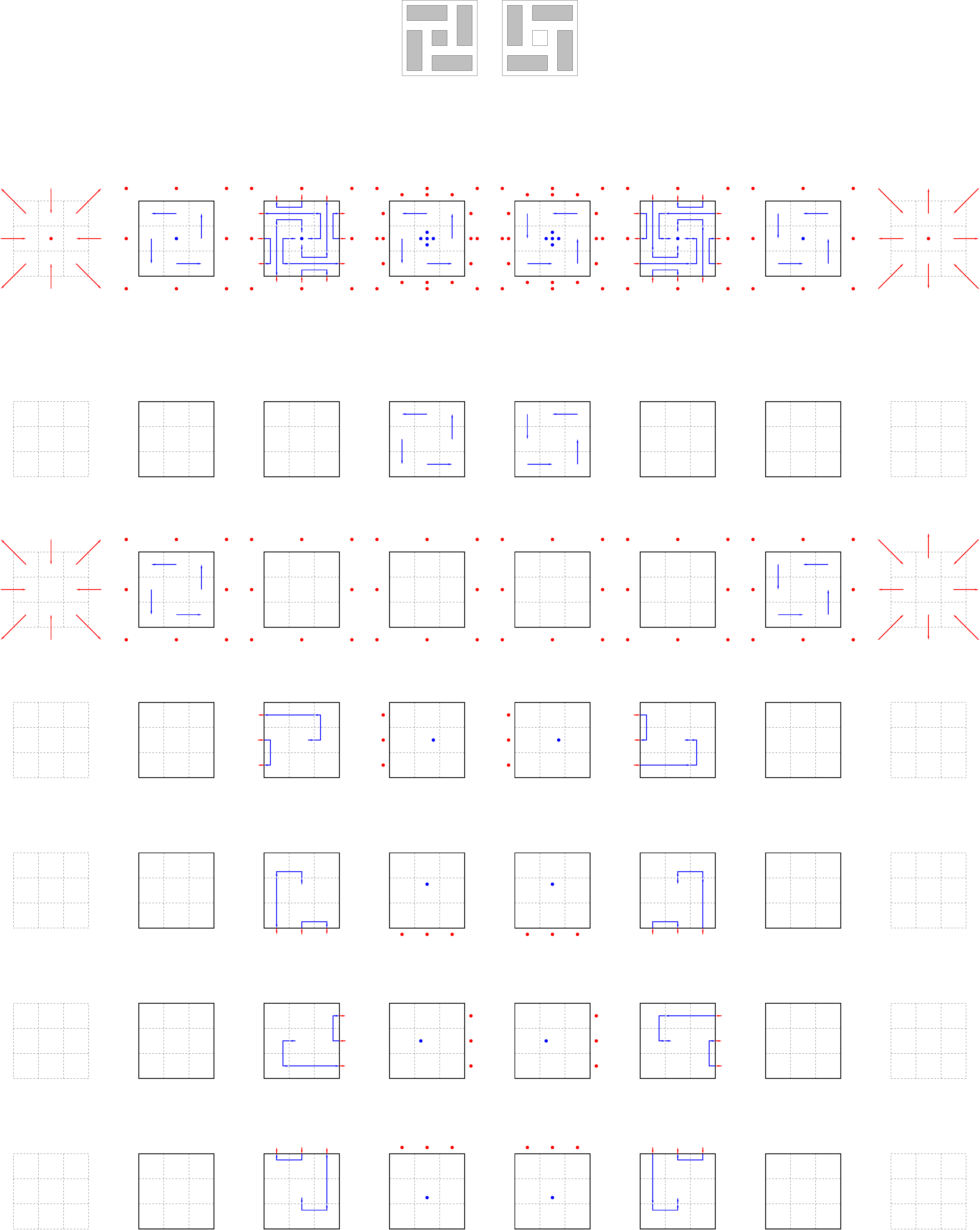}
\end{center}
\caption{On the first row, the tiling $\bt_0$
from Example~\ref{example:332}.
On the second, the vector field $\xi_{\bt_0}$,
i.e., the five-tube version of $\bt_0$.
Curves are shown separately on the other rows.}
\label{fig:332-t0}
\end{figure}

Computing linking and self-linking numbers
by using signed  intersections of curve projections is not hard,
but laborious, and we omit the details for now.
The results are shown in a $7 \times 7$ tabulation matrix $L$ with entries 
$l_{ij} = {\lk}(C_i,C_j)$ for $i \ne j$ and
$l_{ij} = {\slk}(C_i)$ for $i = j$:
\[ L = \begin{pmatrix}
 0 &  0 & -1 & -1 & -1 & -1 & -1 \\
 0 &  0 &  0 &  0 &  0 &  0 & -1 \\
-1 &  0 & -1 & -1 & -1 & -1 & -1 \\
-1 &  0 & -1 & -1 & -1 & -1 & -1 \\
-1 &  0 & -1 & -1 & -1 & -1 & -1 \\
-1 &  0 & -1 & -1 & -1 & -1 & -1 \\
-1 & -1 & -1 & -1 & -1 & -1 & 0 
\end{pmatrix}; \]
Equation~\eqref{equation:Hel} then implies
$\Hel(\xi_{t_0}) = -36\varphi^2 = 36\varphi^2 \Tw(\bt_0)$.

Note that our box $\cR$ has precisely $229$ tilings.
These are $\bt_0$ (with twist $-1$),
the mirror image of $\bt_0$
(on the plane $z=1$, for instance; this tiling has twist $+1$)
and another $227$ tilings of twist $0$.
In this example, any two tilings of twist $0$
can be joined by a finite sequence of flips.
It can be verified (and it follows from Theorem~\ref{theo:main})
that
\begin{equation}
\label{equation:Hel332}
\Hel(\xi_{\bt}) = 36\varphi^2 \Tw(\bt)
\end{equation}
for all $\bt \in \cT(\cR)$.
If $\bt_1$ is the last tiling in Figure~\ref{fig:trit}
it is easy to verify that $\Hel(\xi_{\bt_1}) = 0$
(all curves are trivial; see Figure~\ref{fig:332-tv}).
\end{example}



\section{Appendix: Helicity via linking and self-linking}
\label{appendix}

Here we recall the derivation of the explicit formula \eqref{equation:Hel}:

\begin{prop}
For the helicity of a divergence-free field $\xi$ confined to several narrow non-intersecting linked flux pipes $\cup\, T_i$ in a simply-connected manifold $M$:
$$
{\rm Hel}(\xi ) = 2\sum_{i<j}{\lk} (C_i,C_j)\cdot {\Flux}_i\cdot{\Flux}_j + \sum_i {\slk}(C_i)\cdot ({\Flux}_i)^2.
$$
\end{prop}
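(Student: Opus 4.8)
The plan is to reduce the helicity integral to the contribution of each pair of tubes and each individual tube, using the explicit potential $d^{-1}\omega_\xi$ and the interpretation of linking numbers as intersection numbers with Seifert surfaces. Recall that $\Hel(\xi) = \int_M \omega_\xi \wedge d^{-1}\omega_\xi$ where $\omega_\xi = i_\xi\mu$. Since $\xi$ is supported in the disjoint union of tubes $T_i$, the form $\omega_\xi$ decomposes as a sum $\omega_\xi = \sum_i \omega_i$ with $\supp\omega_i \subseteq T_i$. Writing $\alpha = d^{-1}\omega_\xi = \sum_i \alpha_i$ with $d\alpha_i = \omega_i$, bilinearity gives $\Hel(\xi) = \sum_{i,j}\int_M \omega_i \wedge \alpha_j$. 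First I would isolate the diagonal terms ($i=j$) and the off-diagonal terms ($i\neq j$) and treat them separately.

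For the off-diagonal terms, I would use the standard fact that $\int_M \omega_i \wedge \alpha_j = \int_M \omega_j \wedge \alpha_i$ (integration by parts, boundary terms vanishing since the tubes are compactly supported inside the simply-connected $M$), so that each unordered pair contributes $2\int_M \omega_i \wedge \alpha_j$. The geometric heart is to show $\int_M \omega_i \wedge \alpha_j = \lk(C_i,C_j)\,\Flux_i\,\Flux_j$. Here I would span $C_i$ by a Seifert surface $\Sigma_i$ and note that $\int_M \omega_i \wedge \alpha_j$ localizes to an integral over $\Sigma_i$, where $\int_{\Sigma_i}\alpha_j$ counts (with the flux weight $\Flux_j$) the signed number of times the core $C_j$ pierces $\Sigma_i$, which is precisely $\lk(C_i,C_j)\,\Flux_j$; the remaining $\Flux_i$ comes from the total flux of $\omega_i$ through a cross-section. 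Summing over $i<j$ yields the first term $2\sum_{i<j}\lk(C_i,C_j)\,\Flux_i\,\Flux_j$.

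For the diagonal terms I would treat $\int_M \omega_i \wedge \alpha_i$ as a self-interaction. Since the field trajectories foliate $T_i$ into closed curves that may be mutually linked, the relevant invariant is the self-linking $\slk(C_i)$, defined via the framing given by a neighbouring trajectory. I would choose a satellite curve $C_i'$ realizing this framing and argue, exactly as in the off-diagonal case with $C_i$ replaced by the pair (core, satellite), that $\int_M \omega_i \wedge \alpha_i = \slk(C_i)\,(\Flux_i)^2$. The key point is that the potential $\alpha_i$ restricted to a Seifert surface of $C_i$ detects how the nearby trajectories wind, which is measured by $\slk(C_i)$. Assembling the two pieces gives formula~\eqref{equation:Hel}.

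The main obstacle I anticipate is the diagonal (self-linking) term: making precise the claim that the self-interaction integral equals $\slk(C_i)(\Flux_i)^2$ requires care about the internal structure of the tube, the choice of framing by neighbouring trajectories, and the assumption that trajectories close up into circles. The off-diagonal terms are the classical Moffatt computation and are comparatively routine, amounting to the Biot--Savart/Seifert-surface interpretation of the Gauss linking number; the genuine work lies in handling the self-linking contribution cleanly and in verifying that all boundary terms from integration by parts vanish under the stated hypotheses.
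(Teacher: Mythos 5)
Your decomposition $\Hel(\xi)=\sum_{i,j}\int_M\omega_i\wedge\alpha_j$ and your treatment of the off-diagonal terms are sound, and in fact more elementary than the paper's: the paper obtains the cross terms inside Arnold's asymptotic-linking framework (helicity as the double integral of asymptotic linking numbers of trajectories), whereas you get them by a direct Stokes/Seifert-surface computation with the potential. The genuine gap is in the diagonal terms. Your claim that one can argue ``exactly as in the off-diagonal case with $C_i$ replaced by the pair (core, satellite)'' does not work as stated: the off-diagonal computation rests on the fact that $\alpha_j$ is \emph{closed} on a neighbourhood of $T_i$ (because $d\alpha_j=\omega_j$ has support disjoint from $T_i$), which is precisely what lets you integrate $\alpha_j$ along a curve in $T_i$ and convert it, via Stokes on a Seifert surface, into the intersection number $\lk(\,\cdot\,,C_j)\cdot\Flux_j$. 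When $i=j$ this fails: $d\alpha_i=\omega_i\neq 0$ inside $T_i$, the integral of $\alpha_i$ over the core is not a linking-type invariant, and there is no licence to displace the support of $\omega_i$ onto a satellite. The value of $\int_M\omega_i\wedge\alpha_i$ depends on the full internal structure of the flow in $T_i$ (how all the trajectories wind around one another), not merely on one framing curve; the single-satellite answer is correct only because of the standing hypothesis that the trajectories foliate $T_i$ into circles that are pairwise linked with one and the same number $\slk(C_i)$, and that hypothesis must enter the argument explicitly.

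There are two ways to close the gap. (a) Repair your route: partition $T_i$ into many thin sub-tubes around trajectories, write $\omega_i=\sum_k\omega_{i,k}$ with fluxes $\phi_k$, and apply your off-diagonal computation to each pair $k\neq l$ of sub-tubes (each such pair is a core--satellite pair, hence contributes $\slk(C_i)\,\phi_k\phi_l$); then show that the residual self-terms $\int\omega_{i,k}\wedge\alpha_{i,k}$ are $O(\phi_k^2)$ uniformly, so their sum is at most a constant times $(\max_k\phi_k)\cdot\Flux_i$ and vanishes as the partition is refined, leaving $\slk(C_i)\bigl(\sum_k\phi_k\bigr)^2=\slk(C_i)(\Flux_i)^2$ in the limit. (b) The paper's route: invoke the Gambaudo--Ghys/Shelukhin theorem that the helicity of the field inside a single tube equals the Calabi invariant of its Poincar\'e map, i.e.\ the integral over $D\times D$ of Fathi's asymptotic winding number $W_\phi(x,y)$; for closed trajectories $W_\phi$ is constant equal to $\slk(C_i)$, and the cross-sectional area form integrates to $\Flux_i$, giving $\slk(C_i)(\Flux_i)^2$. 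Either way, the diagonal term requires a real argument (a refinement limit or the asymptotic-winding machinery), not the formal analogy you invoke; you correctly identified this as the main obstacle, but identifying it is not the same as resolving it.
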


Here closed curves  $C_i$ are cores of the pipes $T_i$ and ${\rm Flux}_i$ are the fluxes in those pipes, while ${\lk} (C_i,C_j)$
is the Gauss linking number of the curves.
The self-linking number ${\slk}(C_i)$ is the ``average self-linking" of trajectories of $\xi$  in the given pipe. Namely, if all trajectories of the field $\xi$ in a pipe are closed and have the same pairwise linking
(for instance, the Poincar{\'e} map in the pipe cross-section is a solid rotation  by a rational angle), then ${\slk}$ is equal to that pairwise linking number. For an arbitrary field, the self-linking number slk must be replaced by the integral of the asymptotic winding number of the field's trajectories introduced by Fathi  \cite{Fathi1980} and discussed below.

\smallskip

This formula was a folklore statement since Moffatt's discovery
in \cite{Moffatt69} of the formula 
$$
{\rm Hel}(\xi ) = 2~{\lk} (C_1,C_2)\cdot {\rm Flux}_1\cdot {\rm Flux}_2
$$
for helicity ${\rm Hel}(\xi)=
\int \limits_{M} (\xi , {\rm{curl}}^{-1}\xi )\; d^3x$
of the field $\xi$ supported in two tubes $T_1\cup T_2\subset \RR^3$
with all orbits closed, with equal periods,
and having no internal twist inside the tubes,
and since Arnold's generalization of it
to arbitrary divergence-free fields
as the asymptotic linking number of its trajectories.
It explicitly appeared e.g. in \cite{Scheeler2017} 
with slightly stronger assumptions and
a special interpretation of \textit{slk} via the Calugareanu invariant,
cf. \cite{Moffatt-Ricca}. 

The derivation of the formula above can be obtained
as a following sequence of   statements.

\medskip

\begin{proof}
1) According to Arnold's theorem \cite{Arnold73} the total helicity integral
${\rm Hel}(\xi)=\int \limits_{M} (\xi , {\rm{curl}}^{-1}\xi )\; d^3x$
is equal to the average linking number of trajectories,
$$
{\rm Hel}(\xi)=\iint_{M\times M} {\lk}(x,y)\mu_x \mu_y,
$$
where $ {\lk}(x,y)$ is the asymptotic linking of the trajectories $g^s(x)$ and $g^t(y)$ of the field $\xi$ starting at the points $x,y\in M$, and $\mu$ is the  volume form on $M$. For a field supported on the union of pipes $\cup \,T_i$ it splits into pairwise integrals.

\medskip

2) For two tubes $T_i$ and $T_j$ carrying  a field $\xi$, all whose  orbits are closed with equal periods and with no internal twist, their cross-linking  is given by Moffatt's formula. Note that this also holds in the more general setting of not necessarily closed trajectories and for arbitrary internal twist inside the tubes.
Indeed, by taking the average linking of one closed curve with  trajectories in the other tube   (the case studied in \cite{Arnold73}),
 the corresponding linking is proportional to the flux through a Seifert surface for that curve and it does not depend in the internal twist in the tube. Hence in the formula for cross-linking two  tubes $T_i$ and $T_j$,  only the mutual linking of their cores $C_i$ with $C_j$ and the product of their fluxes $\Flux_i\cdot \Flux_j$ appears.

3) What remains to prove is that the helicity for a field inside the $i$th tube is given by ${\slk}(C_i)\cdot ({\rm Flux}_i)^2.$ 
This is essentially the statement from \cite{Gambaudo-Ghys}, see also \cite{Shelukhin}. 

In more detail, use some Riemannian metric on $M$ to introduce trivialization along the tube, which now can be regarded as a solid torus $D^2\times S^1$ with globally defined cylindrical coordinates $(r, \phi, z)$, where $r\ge 0$ and $z ~{\rm mod}\, 1$. (The invariants introduced will not depend on that choice.) Following \cite{Fathi1980}
for any pair of points $x,y\in D$ one considers the asymptotic winding number 
$$
W_\phi(x,y)=\lim_{T, S\to \infty} \frac{\phi(g^t(y))-\phi(g^s(x))}{2\pi\cdot T\cdot S}
$$ 
for a pair of trajectories $g^t(y), \,t\in [0, T]$ and $g^s(x),\, s\in [0, S]$. It is defined for almost all pairs $x,y\in D$ and it is an integrable function
on $D\times D$. It  was proved in \cite{Gambaudo-Ghys, Shelukhin} that its integral over $D\times D$ is equal to the helicity invariant of the field. It is also the Calabi invariant of the corresponding Poincar{\'e} map $\Psi:D\to D$ for the flow of $\xi$ in the tube:
$$
{\rm Hel}(\xi)={\rm Cal}(\Psi)=\iint_{D\times D} W_\phi(x,y) \,\omega_x\wedge \omega_y
$$
where $\omega$ is the invariant area form induced by the divergence-free vector field $\xi$ in the pipe cross-section, 
$\omega_x=i_\xi\mu_x$.
Note that for a field $\xi$ in a tube along $C$ with closed trajectories of period 1 the value of $W_\phi(x,y)$ is constant and $W_\phi(x,y)={\slk}\,(C)$. On the other hand, the integral of the induced area form on the cross-section is equal to $\Flux_\xi$ of the corresponding vector field $\xi$
through that cross-section. 
\end{proof}

\medskip

\begin{remark} 
Note that the above general formula is, in particular, valid for the case of closed curves with {\it different periods}. 
For instance, rescale  a divergence-free field $\xi$ as $f\xi$ for a function $f$ on a tube $T$.
The divergence-free constraint on $f\xi$ implies that $f$ is a first integral of the field $\xi$, while 
the flux ${\rm Flus}_{f\xi}$ of the new field must be the same in all cross-sections. 
Then  ${\rm Hel}(f\xi)=(\int_D f(x)\,\omega_x)^2\cdot {\rm Hel}(\xi)$, see e.g.  \cite{Gambaudo-Ghys}, while the latter expression transforms as follows:
 $$
{\rm Hel}(f\xi)=
(\int_D f(x)\,\omega_x)^2\cdot {\slk}(C)\cdot ({\rm Flux}_\xi)^2
=  {\slk}(C)\cdot ({\rm Flux}_{f\xi})^2\,,
  $$
which emphasizes  universality of the formula \eqref{equation:Hel} via fluxes.
\end{remark}


\medskip

\noindent
\footnotesize
B.K.: Department of Mathematics, University of Toronto \\
40 St. George Street, Toronto, ON M5S 2E4, Canada \\
\url{khesin@math.toronto.edu}
\medskip

\noindent
\footnotesize
N.S.: Departamento de Matem\'atica, PUC-Rio \\
Rua Marqu\^es de S\~ao Vicente 225, Rio de Janeiro, RJ 22451-900, Brazil \\
\url{saldanha@puc-rio.br}

\end{document}